\definecolor{mygray}{gray}{.9}
\newcommand{\ket}[1]{| #1 \rangle} 
\newcommand{\bra}[1]{\langle #1 |} 
\newcommand{\bb}{\boldsymbol}
\def \d {\mathrm{d}}
\def \e {\mathrm{e}}
\def \i {\mathrm{i}}
\newcommand{\bbR}{\mathbb{R}}
\newcommand{\Cp}{\mathcal{P}}
\newcounter{parentalgorithm}
\newtheorem{theorem}{Theorem}[section]
\newtheorem{lemma}{Lemma}[section]
\theoremstyle{remark}
\newtheorem{remark}{\bf Remark}[section]
\numberwithin{equation}{section}
\begin{document}

\title{On the  Schr\"odingerization method for linear non-unitary dynamics with optimal dependence on matrix queries}

\author[1]{Shi Jin\thanks{shijin-m@sjtu.edu.cn}}
\author[1, 2]{Nana Liu\thanks{nana.liu@quantumlah.org} }
\author[3,4,5]{Chuwen Ma\thanks{cwma@math.ecnu.edu.cn}}
\author[6]{Yizhe Peng\thanks{pengyizhe@smail.xtu.edu.cn}}
\author[6]{Yue Yu\thanks{terenceyuyue@xtu.edu.cn}}
\affil[1]{School of Mathematical Sciences, Institute of Natural Sciences, MOE-LSC, Shanghai Jiao Tong University, Shanghai, 200240, China}
\affil[2]{University of Michigan-Shanghai Jiao Tong University Joint Institute, Shanghai 200240, China}
\affil[3]{School of Mathematical Sciences, East China Normal University, Shanghai 200241, China}
\affil[4]{Key Laboratory of MEA, Ministry of Education, East China Normal University, Shanghai 200241, China,
}
\affil[5]{
Shanghai Key Laboratory of PMMP, East China Normal University, Shanghai 200241, China}
\affil[6]{Hunan Key Laboratory for Computation and Simulation in Science and Engineering, Key Laboratory of Intelligent Computing and Information Processing of Ministry of Education, National Center for Applied Mathematics in Hunan, School of Mathematics and Computational Science, Xiangtan University, Xiangtan, Hunan 411105, China}

\maketitle

\begin{abstract}
  The Schr\"odingerization method converts linear partial and ordinary differential equations with non-unitary dynamics into systems of Schr\"odinger-type equations with unitary evolution. It does so via the so-called warped phase transformation that maps the original equation into a Schr\"odinger-type equation in one higher dimension \cite{Schrshort,JLY22SchrLong}. The original proposal used a particular initial function in the auxiliary space that did not achieve optimal scaling in precision. Here we show that, by choosing smoother initial functions in auxiliary space, Schr\"odingerization \textit{can} in fact achieve near optimal and even optimal scaling in matrix queries. We construct three necessary criteria that the initial auxiliary state must satisfy to achieve optimality. 
  This paper presents detailed implementation of four smooth initializations for the Schr\"odingerization method:
(a) the error function and related functions, (b) the cut-off function, (c) the higher-order polynomial interpolation, and (d) Fourier transform methods. Method (a) achieves optimality and methods (b), (c) and (d) can achieve near-optimality. 
A detailed analysis of key parameters affecting time complexity is conducted.
\end{abstract}


\tableofcontents

\section{Introduction}
Quantum computing, an emerging technology, utilizes the principles of quantum mechanics to achieve unprecedented computational power~\cite{hidary2019quantum,NC2010Quantum,preskill2018quantum,rieffel2011quantum}. Quantum algorithms operate within an $n$-qubit Hilbert space of dimension $2^n$, potentially offering polynomial to even exponential computational advantage for models involving vast amounts of data. Hence, it has become an attractive computational paradigm to handle large-scale scientific computing problems that are bottlenecks for classical computation.
A natural application is partial differential equations (PDEs) from time-dependent Schr\"odinger equations, which follow unitary evolutions and hence the wave functions can be coherently represented on quantum computers. Known as Hamiltonian simulations, a variety of efficient algorithms have been developed toward this goal \cite{BCC15,Berry-Childs-Kothari-2015,BCC17,LC17,Low2019qubitization,CGJ19,Berry2019Dyson,
AnLin2021TimeHamiltonian,JLY22SchrLong,An2022timedependent,fang2023time}.

However, many physical phenomena~--~such as combustion, atmospheric and oceanic circulation, and electromagnetic wave propagation with physical boundaries~--~exhibit non-unitary dynamics. Even the time-dependent Schr\"odinger equation becomes non-unitary when artificial boundary conditions are introduced, making traditional Hamiltonian simulation techniques inapplicable~\cite{JLLY23ABC}.
Alternatively, crafting quantum PDE solvers involves discretizing spatial variables to formulate a system of ordinary differential equations (ODEs), which can then be tackled using quantum ODE solvers \cite{Berry-2014,BerryChilds2017ODE,Childs-Liu-2020}.
In recent years, significant progress has been made in designing and analyzing efficient quantum algorithms for linear ODEs. These algorithms can be classified into several categories. The first category involves three steps: discretizing the time variable, encoding the discretized linear differential equation into an enlarged linear system, and solving the resulting system using Quantum Linear System Algorithms (QLSA) \cite{Berry-2014, BerryChilds2017ODE, KroviODE, JLY2022multiscale, WL24, DLX25, Childs-Liu-2020, Berry2024Dyson}. The second category leverages an integral representation of the non-unitary evolution operator, followed by Linear Combination of Hamiltonian Simulations \cite{ALL2023LCH, ACL2023LCH2}, and its relationship to Schr\"odingerization, the method in the third category, is explored in \cite{Li25}. The third category involves dilating the system into a unitary system.
Crucially, if the solution operator for the resulting ODE system is unitary, quantum simulations can achieve reduced time complexity compared to other quantum linear algebra methods \cite{Berry-2014,JLY2022multiscale,ALWZ25,WL24,DLX25}.
In cases where the system is not unitary~--~such as when incorporating physical boundary conditions~--~it becomes necessary to ``dilate'' it to
a unitary system \cite{JLY22SchrLong,Javier2022optionprice,Burgarth23dilations,CJL23TimeSchr,DLX25}. Other methods, such as those employing block-encoding techniques, can be found in \cite{ALWZ25,fang2023time}.

Among the unitarization techniques, the {\it Schr\"odingerization} method proposed in~\cite{Schrshort, JLY22SchrLong} offers a simple and general framework enabling quantum simulation for {\it all} linear PDEs and ODEs. It employs a warped phase transformation to lift the original equations into a higher-dimensional space, where they become Schr\"odinger-type equations~--~with unitary evolutions~--~in the Fourier domain!
This method has since been extended to a wide range of settings, including open quantum systems with non-unitary artificial boundary conditions~\cite{JLLY23ABC}, systems with physical boundary and interface conditions~\cite{JLLY2024boundary}, Maxwell's equations~\cite{JLC23Maxwell,MJLWZ24}, the Fokker-Planck equation~\cite{JLY24FokkerPlanck}, ill-posed problems such as the backward heat equation~\cite{JLM24SchrBackward}. It has also been applied in iterative linear algebra solvers~\cite{JinLiu-LA}. Moreover, as a naturally continuous-variable method~\cite{analogPDE}, it represents the {\it only} viable approach so far for analog quantum simulation of PDEs and ODEs. The method can also be adapted to parabolic PDEs using a Jaynes-Cummings-type model, which is more readily available on current devices ~\cite{JL24JaynesCummings}.

We consider linear dynamical systems with a general evolution operator $ A(t) $ and an inhomogeneous term $ \bm{b}(t) $ as given in \eqref{ODElinear}, and present a detailed implementation of the corresponding quantum algorithm using block-encoding techniques as described in \cite{Low2019Interaction,ACL2023LCH2}. In our implementation, we first transform the system into a homogeneous one by enlarging the system with an auxiliary variable.
Using the Schr\"odingerization method, we then transform it into a Hamiltonian system with unitary evolution operator, which can be efficiently solved on quantum computers. For time-dependent Hamiltonian dynamics, we apply the quantum simulation technique from \cite{CJL23TimeSchr,CJL24} for non-autonomous systems. This technique involves transforming a non-autonomous system into an autonomous one in a higher dimension, avoiding the need for the complicated Dyson series. Consequently, we focus exclusively on analyzing the optimal dependence in the time-independent case.

In this article, we focus on the {\it optimal scaling} behavior of the Schr\"odingerization method for non-unitary dynamics. In the original Schr\"odingerization method, a simple even-function $\psi(p) = \e^{-|p|}$ is used as the initial auxiliary state, resulting only in a first-order approximation, due to the lack of regularity of this state. This lack of regularity meant that achieving precision $\epsilon$ may require a small enough mesh size $\triangle p = \mathcal{O}(\epsilon)$, which results in the maximum absolute value among the discrete Fourier modes scaling being $\mathcal{O}(1/\epsilon)$, i.e., $\mu_{\max} = \mathcal{O}(1/\epsilon)$. This is not optimal because the query complexity linearly depends on $\mu_{\max}$, which is the maximum Fourier mode in absolute value, for the extended variable $ p $.

Since this non-optimal $\mathcal{O}(1/\epsilon)$ scaling is ultimately due to the lack of regularity of the initial auxiliary state function, then improving this scaling is only a question of how to smooth out this initial function in an appropriate way. For example, as already pointed out in \cite{JLM24SchrBackward}, we can achieve improved~--~near optimal~--~scaling by employing a smoother initialization. In this article we provide four different smooth initializations, with detailed analysis on their complexities. One of these methods (Section~\ref{sec:optimal scaling}) leads to \textit{optimal complexity} and the other three methods (Section~\ref{sec:smoothmethod}) lead to \textit{near optimal complexity}. In Section~\ref{sec:optimal scaling} we also provide three necessary criteria that $\psi(p)$ should satisfy in order to achieve optimal scaling.

Through a detailed analysis of the parameters affecting time complexity, we find that the query complexity scales linearly with $ \mu_{\max} $, the maximum Fourier mode in absolute value, for the extended variable $ p $ with a generic initialization function $\psi(p)$ in the $p$-domain.
Assuming that $\| \psi - P_h \psi \|_{L^2(\mathbb{R})} \le \epsilon$ for $ \psi(p) \in H^r(\mathbb{R})$, with $P_h \psi$ being the discrete Fourier approximation of $\psi$, we observe that $\mu_{\max}$ scales as $\mathcal{O}((1/\epsilon)^{1/r} \|\psi^{(r)}\|_{L^2(\mathbb{R})}^{1/r})$, where $\psi^{(r)}$ denotes the $r$-th derivative of $\psi$ and $H^r$ is the standard Sobolev space. When $r$ is sufficiently large, specifically $r \simeq \log(1/\epsilon)$, we have $\mu_{\max} \simeq \|\psi^{(r)}\|_{L^2(\mathbb{R})}^{1/r}$. Thus, the precision scaling problem reduces to identifying an appropriate function $\psi(p)$ such that
\begin{equation}\label{psir}
\|\psi^{(r)}\|_{L^2(\mathbb{R})}^{1/r} \le C r^{1/\beta}, \qquad \beta\in (0,1],
\end{equation}
where $\beta = 1$ implies the optimal precision dependence. 

Based on this observation, we provide an abstract framework for the complexity analysis in Theorem \ref{thm:complexity}.
For academic interest, we first provide several sufficiently smooth initializations that offer nearly exponential speedup in the $p$-variable over the original Schr\"odingerization method in terms of precision $\varepsilon$, including cut-off functions, higher-order interpolation, and Fourier transform methods.
Since we impose the condition $\psi(p) = \e^{-p}$ for $p>0$, the dependence on matrix queries can only be made near-optimal, as
$\beta<1$. Achieving optimal dependence requires $\beta=1$. We therefore summarize the conditions on $\psi$ necessary for attaining optimal dependence and provide a detailed estimate. In particular, we employ the error function $\text{erf}(p)$ to achieve  the required optimal bound and show that this is not the unique function, but is the simplest.

Our construction is based on a simple yet important observation: the process of constructing a smooth initialization function $\psi(p)$ reduces to finding a smooth approximation of the step function. Such smoothness is typically achieved through convolution.
Given the periodic boundary conditions required for the discrete Fourier transform, we first consider using a mollifier as the convolution kernel. The mollifier is infinitely smooth and has compact support, thereby realizing the {\it exact} periodic boundaries. However, upon careful analysis, we find that smoothness alone is insufficient to satisfy \eqref{psir}. In fact, we require a stricter condition: the step function approximation must also be {\it analytic}. Under this condition, the most natural choice for the convolution kernel is the Gaussian $\e^{-p^2}$, which leads to the expression involving the error function.
For both choices of convolution kernels, we provide a rigorous proof of the bounds for the derivatives of the smooth initialization functions.
This demonstrates that the mollifier yields $\beta = 1/2$, while the Gaussian results in $\beta = 1$. Consequently, we derive the near-optimal cost for the mollifier and the optimal scaling behavior for the Gaussian by applying the abstract framework for complexity analysis.

In Table \ref{tab:comparison}, we compare our algorithm with previous approaches in the homogeneous case.
It is evident that the Schr\"odingerization method, with sufficiently smooth initialization in $p$,  achieves both optimal state preparation cost and optimal dependence of the number of queries to the matrix on all parameters. We also note that the improvement in the LCHS method in \cite{ACL2023LCH2} only leads to a sub-optimal dependence on matrix queries, as $ \beta $ cannot be exactly equal to 1 (i.e., $ 1/\beta > 1 $), where $\beta$ is the parameter in the kernel function of the improved LCHS.

During the revision stage of this work, we became aware of a concurrent study by \cite{Low25LCHS}, which presents a generalized {\it approximate} version of LCHS. This extension incorporates a kernel function with exponential decay, enabling a quantum ODE algorithm that achieves optimal dependence on precision.

\begin{table}[!htb]
  \centering
  \caption{Comparison among improved Schr\"odingerization and previous methods for homogeneous dynamical systems $\d \bm{u}/\d t= A\bm{u}$. Here, $u_r = \frac{\|\bm{u}_0\|}{\|\bm{u}(T)\|}$, $\alpha_A \ge \|A\|$, $T$ is the evolution time, $\varepsilon$ is the error, and $\beta\in(0,1)$. All but the spectral method assume the real part
of $A$ to be negative semi-definite, while in the spectral method $A$ is assumed to be diagonalizable with
matrix  $V$ such that $\kappa_V \ge \|V^{-1}\| \|V\|$ and  all the eigenvalues of $A$ have non-positive real parts.}\label{tab:comparison}
  \begin{tabular}{c|c|cccc}
  \hline\hline
  Method & Queries to $A$   &Queries to $\bm{u}_0$  \\
  \hline
   Spectral method \cite{Childs-Liu-2020}     & $\widetilde{\mathcal{O}}(u_r \kappa_V \alpha_A T \text{poly}(\log\frac{1}{\varepsilon}))$ & $\widetilde{\mathcal{O}}(u_r \kappa_V \alpha_A T \text{poly}(\log\frac{1}{\varepsilon}))$\\
   \hline
   Truncated Dyson \cite{Berry2019Dyson}     & $\widetilde{\mathcal{O}}(u_r \alpha_A T (\log\frac{1}{\varepsilon})^2)$  & ${\mathcal{O}}(u_r \alpha_A T \log\frac{1}{\varepsilon})$ \\
   \hline
   Time-marching \cite{fang2023time}        &  $\widetilde{\mathcal{O}}(u_r\alpha_A^2 T^2 \log\frac{1}{\varepsilon})$ & ${\mathcal{O}}(u_r )$\\
   \hline
   \shortstack{Improved LCHS \\ time-dependent \cite{ACL2023LCH2}}       & $\widetilde{\mathcal{O}}(u_r  \alpha_A T (\log\frac{1}{\varepsilon})^{1+1/\beta})$ & ${\mathcal{O}}(u_r  )$\\
   \hline
   \shortstack{Improved LCHS \\ time-independent \cite{ACL2023LCH2}}        & $\widetilde{\mathcal{O}}(u_r  \alpha_A T (\log\frac{1}{\varepsilon})^{1/\beta})$  & ${\mathcal{O}}(u_r  )$\\
   \hline
    \shortstack{Optimal LCHS \\ time-independent \cite{Low25LCHS}}  & $\widetilde{\mathcal{O}}(u_r  \alpha_A T \log\frac{1}{\varepsilon})$ & ${\mathcal{O}}(u_r  )$\\
    \hline
   This work, time-dependent           & $\widetilde{\mathcal{O}}(u_r  \alpha_A T (\log\frac{1}{\varepsilon})^2)$ & ${\mathcal{O}}(u_r  )$\\
   \hline
   This work, time-independent          & $\widetilde{\mathcal{O}}(u_r  \alpha_A T \log\frac{1}{\varepsilon})$  & ${\mathcal{O}}(u_r  )$\\
  \hline \hline
\end{tabular}
\end{table}

\paragraph{Notation.}
Throughout the paper, we adopt zero-based indexing: indices
$ j \in \{0,1,\dots,N-1\}, $
and we also write \(j \in [N]\) with \([N] := \{0,1,\dots,N-1\}\). We use
\(\ket{j} \in \mathbb{C}^{N}\) to denote the \(j\)-th canonical basis vector,
whose \(j\)-th component is \(1\) and all other components are \(0\).
We denote the identity and zero matrices by \(I\) and \(O\), respectively; their
dimensions should be clear from the context. When clarification is needed, we
write \(I_N\) for the \(N \times N\) identity matrix. In particular,
\(\mathbf{1}\) denotes the \(2 \times 2\) identity matrix (acting on a single
qubit).

Vector-valued quantities are denoted by boldface symbols, e.g.\ \(\bb{u}\).
Given a nonzero vector \(\bb{u}\), the notation \(\ket{\bb{u}}\) represents the pure quantum state obtained by normalizing \(\bb{u}\) in the Euclidean norm,
$ \ket{\bb{u}} \;=\; \frac{\bb{u}}{\|\bb{u}\|}.$

Unless otherwise specified, for a vector \(\bb{u} \in \mathbb{C}^N\) we write
$ \|\bb{u}\| := \|\bb{u}\|_2 $
for the standard Euclidean ($l^2$) norm. For a matrix
\(A \in \mathbb{C}^{N \times N}\), the notation \(\|A\|\) refers to the operator
norm induced by the Euclidean norm,
$ \|A\| := \sup_{\bb{u} \neq 0} \frac{\|A \bb{u}\|}{\|\bb{u}\|}. $
When Sobolev norms are used, we write \(\|\cdot\|_{H^s(\Omega)}\),
\(\|\cdot\|_{L^2(\Omega)}\), etc., and these always denote the standard norms on
the corresponding Sobolev spaces.


For asymptotic estimates we write $O(\cdot)$, $\Omega(\cdot)$ and $\Theta(\cdot)$
in the usual sense. In particular, $f=O(g)$ means that
$|f|\le C\,g$ for some constant $C>0$ independent of the relevant parameters.
We use $\widetilde{O}(\cdot)$ to suppress polylogarithmic factors, e.g.\
$ f = \widetilde{O}(g)$  means $ f = O\bigl(g\,\mathrm{polylog}(g)\bigr). $
We also write $f\lesssim g$ to indicate an inequality of the form
$
f \le C\,g,
$
where the constant $C>0$ is independent of the mesh size $h$, the final time
$T$, the target accuracy $\varepsilon$, and other sensitive problem parameters.
Unless stated otherwise, all logarithms are natural logarithms.

\paragraph{Organization of the paper.} The paper is structured as follows.
In Section \ref{sec:Schrodingerization}, we offer an overview of the Schr\"odingerization approach, present the full implementation by using block-encoding techniques and establish an abstract framework for the complexity analysis.
Section \ref{sec:smoothmethod} provide sufficiently smooth initializations that offer nearly exponential speedup in the $p$-variable over the original Schr\"odingerization method. Section \ref{sec:optimal scaling} demonstrates how optimal scaling in matrix queries can be attained through the modification of the initializations of the warped phase transformation. We establish the optimal dependence by constructing a function using the error function $\text{erf}(p)$. Section \ref{sec:err} shows the detailed error estimate for the Schr\"odingerization.
Finally, some discussions are presented in the last section.

\section{The Schr\"odingerization method for non-unitary dynamics} \label{sec:Schrodingerization}

Consider a system of linear dynamical system in the form
\begin{equation}\label{ODElinear}
 \begin{cases}
 \dfrac{\d }{\d t} \bb{u}(t) = A(t) \bb{u}(t) + \bb{b}(t),  \qquad t\in (0,T), \\
 \bb{u}(0) = \bb{u}_0,
 \end{cases}
\end{equation}
where $T$ is the evolution time, $\bb{u}= [u_0, u_1,\cdots, u_{N-1}]^\top, \bb{b}= [b_0, b_1,\cdots, b_{N-1}]^\top \in \mathbb{C}^N$  and $A \in \mathbb{C}^{N\times N} $. In general, $A$ is not anti-Hermitian, i.e.,  $A^{\dagger} \neq -A$, where "$\dagger$" denotes conjugate transpose. When $A$
is a linear operator, \eqref{ODElinear} is a system of ODEs. When $A$
is a linear differential operator, \eqref{ODElinear} is a system of PDEs.
By introducing an auxiliary vector function $\bm{r}(t)$ that remains constant in time if $\bm{b}\neq 0$, system \eqref{ODElinear} can be rewritten as a homogeneous system
\begin{equation}\label{eq: homo Au}
	\frac{\d}{\d t} \bm{u}_f
	= A_f \bm{u}_f, \quad
	A_f  =  \begin{bmatrix}
		A &  B \\
		O &O
	\end{bmatrix},\quad
	\bm{u}_f(0) =\bm{u}_{I}:= \begin{bmatrix}
		\bm{u}_0 \\
		\bm{r}_0
	\end{bmatrix},
\end{equation}
where $B=\text{diag}\{b_0/\gamma_0, \cdots, b_{N-1}/\gamma_{N-1}\}$ and $\bb{r}_0 = [\gamma_0, \cdots , \gamma_{N-1}]^\top$, with
\begin{equation}\label{eq:gamma}
	\gamma_i = T \sup_{t\in [0,T]} |b_i(t)|, \qquad i = 0,1,\cdots,N-1.
\end{equation}
Here, each $\sup_{t\in [0,T]} |b_i(t)|$ can be replaced by its upper bound and we set $b_i/\gamma_i = 0$ if $b_i(t) \equiv 0$.

\subsection{The Schr\"odingerizaton method}

In this section, we briefly review the Schr\"odingerization approach for general linear dynamical systems.
For a general $A_f$, we first decompose $A_f$ into a Hermitian term and an anti-Hermitian term:
\[A_f(t) = H_1(t) + \i H_2(t), \qquad \i = \sqrt{-1},\]
where
\[
H_1(t) = \frac{A_f(t)+A_f^{\dagger}(t)}{2} =  \begin{bmatrix}
	H_1^A & \frac{ B}{2}\\
	\frac{B^{\top}}{2} &O
\end{bmatrix}, \quad H_2(t) = \frac{A_f(t)-A_f^{\dagger}(t)}{2 \i} = \begin{bmatrix}
H_2^A & \frac{B}{2\i}\\
-\frac{ B^{\top}}{2\i} &O
\end{bmatrix},
\]
with $H_1^A = (A+A^{\dagger})/2$ and $H_2^A = (A-A^{\dagger})/(2\i)$.
Throughout the article, we assume that the real part matrix $ H_1^A $ is negative semi-definite. More general cases are addressed in \cite{JLM24SchrInhom, JLM24SchrBackward, JLY24FokkerPlanck}.

Using the warped phase transformation $\bb{w}(t,p) = \e^{-p} \bb{u}_f(t)$ for $p\ge 0$ and symmetrically extending the initial data to $p<0$,  system \eqref{ODElinear} is  then transformed to a system of linear convection equations \cite{Schrshort,JLY22SchrLong}:
\begin{equation}\label{u2v}
\begin{cases}
 \dfrac{\partial}{\partial t} \bb{w}(t,p)  = - H_1(t) \partial_p \bb{w} + \i H_2(t) \bb{w}, \\
 \bb{w}(0,p) = \psi(p) \bb{u}_{I},
 \end{cases}
\end{equation}
where $\psi(p):=\e^{-|p|}$.
According to \cite[Theorem 3.1]{JLM24SchrInhom}, we can restore the solution $\bb{u}_f(t)$ by
\begin{equation}\label{eq:recovery}
\bm{u}_f = \e^p \bm{w}(t,p),\quad p\geq p^{\Diamond}   = \lambda_{\max}^{+}(H_1)T.
\end{equation}
Here $\lambda_{\max}^{+}(H_1)$  is defined by
\begin{equation}\label{eq:lambda_max_plus}
	\lambda_{\max}^{+}(H_1) = \begin{cases}
		\displaystyle \sup_{\substack{\lambda \in \sigma(H_1(t)) \ 0 < t < T, 0<\lambda }} |\lambda|, & \text{if } \exists \lambda > 0 \text{ in } \sigma(H_1(t)) \text{ over } [0,T),\\
		0, & \text{otherwise},
	\end{cases}
\end{equation}
 with $\sigma(H_1)$ the set of eigenvalues of $H_1$. 
 Similarly, $\lambda_{\max}^-(H_1)$ is defined by 
\begin{equation}\label{eq:lambda_max_minus}
	\lambda_{\max}^{-}(H_1) = \begin{cases}
		\displaystyle \sup_{\substack{\lambda \in \sigma(H_1(t)) \ 0 < t < T, \lambda<0 }} |\lambda|, & \text{if } \exists \lambda <0 \text{ in } \sigma(H_1(t)) \text{ over } [0,T),\\
		0, & \text{otherwise}.
	\end{cases}
\end{equation}
 Since $H_1^A$ is negative, it is easy to find from \eqref{eq:gamma} that
 \[\lambda_{\max}^{+}(H_1)T \leq \frac{1}{2}\|B\|_{\max} T\leq \frac{1}{2}.\]

 For numerical implementation, we truncate the extended region to a finite interval $p\in [-L,R]$ with $L>0$ and $R>0$ satisfying
\begin{equation}\label{eq: L,R,criterion}
\e^{-L+\lambda_{\max}(H_1)T}\approx \e^{-R+\lambda_{\max}(H_1)T} \approx \epsilon.
\end{equation}
Here $\lambda_{\max}(H_1)$ denotes the largest absolute value among the eigenvalues of $H_1$, and $\epsilon$ is a predetermined tolerance constant, which will be specified later.

The requirement in \eqref{eq: L,R,criterion} is explained as follows. Since the original problem is posed on the whole space, we truncate it to a finite interval $ [-L, R] $ with periodic boundary conditions. This means that we require $ \bb{w}(0, -L) \approx \bb{w}(0, R) \approx \epsilon $, or equivalently, $ \e^{-L} \approx \e^{-R} \approx \epsilon $. For the transport equation $ u_t - a u_p = 0 $ with $ a > 0 $, the initial value at $ p_0 $, i.e., $ u(0, p_0) $, will remain constant along the characteristic line $ p + at = p_0 $, which implies $ u(t, p_0 - at) = u(0, p_0) $. For the transport equation in \eqref{u2v} with periodic boundaries, the solution values at $ p = -L $ and $ p = R $ must also be compatible along characteristics, based on the initial data in the regions $ (-L, -L + \lambda_{\max}(H_1)T) $ and $ (R - \lambda_{\max}(H_1)T, R) $.
If the initial data in these regions has already decayed to the level of $\epsilon$, then the boundary values and their higher derivatives satisfy
\[\bm{w}^{(k)}(t,-L)\approx \bm{w}^{(k)}(t,R)\approx \epsilon, \quad t\in [0,T],\]
so that the periodic boundary condition is consistent with the infinite-domain problem up to accuracy $\epsilon$.

Toward this end,  we choose a uniform mesh size $\triangle p = (R+L)/N_p$ for the auxiliary variable with $N_p=2^{n_p}$ being an even number, with the grid points denoted by $-L = p_0<p_1<\cdots<p_{N_p} = R$.
 Let the vector $\bb{W}_h \in \mathbb{C}^{N_{np}\times 1}$ with $N_{np} = N\times N_p$ be the collection of $\bm{w}_h(t,p)$ at these grid points, defined more precisely as $
\bb{W}_h(t) = \sum_{k\in [N_p],i\in [N]} w_{i,h}(t,p_k) \ket{k,i}$, where $w_{i,h}$ is the $i$-th entry of $\bb{w}_h$ and
$\ket{k,i} = \ket{k}\ket{i}$.

By applying the discrete Fourier transform in the $p$  direction, one arrives at
\begin{equation}\label{heatww}
\frac{\d}{\d t} \bb{W}_h(t) = -\i (P_\mu \otimes  H_1 ) \bb{W}_h + \i (I\otimes H_2 ) \bb{W}_h , \quad
\bb{W}_h(0) = \bm{\psi} \otimes \bb{u}_{I},
\end{equation}
where $\bb{\psi} = [\psi(p_0), \cdots, \psi(p_{N_p-1})]^{\top}$.
 Here, $P_\mu$ is the matrix expression of the momentum operator $-\i\partial_p$, given by
\begin{equation}\label{Pmu}
P_\mu = \Phi D_\mu \Phi^{-1},  \qquad D_\mu = \text{diag}(\mu_0, \cdots, \mu_{N_p-1}),
\end{equation}
where $\mu_k = \frac{2\pi}{R+L} ( k - \frac{N_p}{2}) $ are the Fourier modes and
\[\Phi = (\phi_{jl})_{N_p \times N_p} = (\phi_l(x_j))_{N_p\times N_p}, \qquad \phi_l(x) = \e ^{\i \mu_l (x+L)} \]
is the matrix representation of the discrete Fourier transform.
At this point, we have successfully mapped the dynamics back to a Hamiltonian system.
By a change of variables $\tilde{\bb{W}}_h = (\Phi^{-1} \otimes I)\bb{W}_h$, one has
\begin{equation}\label{generalSchr}
\frac{\d}{\d t} \tilde{\bb{W}}_h(t) = -\i H(t) \tilde{\bb{W}}_h(t) ,
\end{equation}
where $ H = D_\mu \otimes H_1 -  I \otimes H_2 $.

 \begin{remark}
Our method for solving \eqref{ODElinear} with a time-dependent source term encodes $\bb{b}(t)$ directly within the coefficient matrix. This results in the same query complexity for both the coefficient matrix $A(t)$ and $\bb{b}(t)$. To minimize the repeated use of the state preparation oracle $ O_b $ for the source term $ \bb{b} $, when $\bb{b}(t)$ is time-independent, we can instead consider a simpler enlarged system
\[
\frac{\d }{\d t} \bb{u}_f(t) = \begin{bmatrix}
A  &  \frac{I}{T} \\
O     &  O
\end{bmatrix} \bb{u}_f(t), \qquad \bb{u}_f(t) = \begin{bmatrix} \bb{u}(t) \\ T \bb{b} \end{bmatrix}, \qquad
\bb{u}_f(0) = \bm{u}_I:= \begin{bmatrix} \bb{u}(0) \\ T \bb{b} \end{bmatrix}.
\]
In the time-dependent case, it may be advantageous to separately implement their homogeneous and inhomogeneous parts and combine them using the LCU technique \cite{ACL2023LCH2}. Each execution of the LCU procedure requires $ \mathcal{O}(1) $ uses of the associated preparation oracles, with the overall complexity primarily dependent on the success probability.
 \end{remark}

\subsection{Quantum simulation for time-dependent Schr\"odingerized system}
\label{sub:time-dependent}

If the coefficient in the dynamical system is time-dependent, namely a non-automomous system, one can turn it into an autonomous unitary system via dimension lifting
 \cite{CJL23TimeSchr}.
First, via Schr\"odingerization, one obtains a time-dependent Hamiltonian
\begin{equation}
	\frac{\d}{\d t}  \tilde{\bm{W}}_h = -\i H(t) \tilde{\bm{W}}_h,\quad
	H = H^{\dagger}.
\end{equation}
By introducing a new ``time" variable $s$, the problem becomes a new linear PDE  defined in one higher dimension but with time-independent coefficients,
\begin{equation}
	\frac{\partial \bm{v}}{\partial t} = -\frac{\partial \bm{v}}{\partial s} -\i H(s) \bm{v} \qquad
	\bm{v}(0,s) = \delta(s) \tilde{\bm{w}}_h(0), \quad s\in \bbR,
\end{equation}
where $\delta(s)$ is the dirac $\delta$-function.  One can easily recover $ \tilde{\bm{W}}_h$ by  $\tilde{\bm{W}}_h = \int_{-\infty}^{\infty} \bm{v}(t,s)\;\d s$.

Since $\bb{v}$ decays to zero as $s$ approaches infinity, the $s$-region can be truncated to $[-\pi S, \pi S]$, where $\pi S > 4\omega +T$, with $2\omega$ representing the length of the support set of the approximated delta function. Choosing $S$ sufficiently large ensures that the compact support of the approximated delta function remains entirely within the computational domain throughout the simulation, allowing the spectral method to be applied.
The transformation and difference matrix are defined by
\begin{equation*}
	(\Phi_s)_{lj} = (\e^{\i\mu^s_l(j\triangle s)}), \quad
	D_s = \text{diag} \{\mu^s_0,\mu^s_1,\dots,\mu^s_{N_s-1}\},
	\quad \mu^s_l = (l-\frac{N_s}{2})S, \quad l,j\in[N_s],
\end{equation*}
where $\triangle s = 2\pi S/N_s$.
Applying the discrete Fourier spectral discretization, it yields a time-{\it independent} Hamiltonian system as
\begin{equation}\label{eq:tilde v time independent}
	\frac{\d}{\d t} \tilde{\bm{V}}_h = -\i \big(D_s \otimes I+I_{N_s}\otimes H\big)\tilde{\bm{V}}_h, \quad
	\tilde{\bm{V}}_h (0) = [\Phi_s^{-1}\otimes I] (\mathbf{\delta}_{\bm{h}}\otimes
	\tilde{\bm{W}}_h(0)),
\end{equation}
where $\mathbf{\delta}_{\bm{h}} = \sum\limits_{j\in [N_s]}\delta_w(s_j)\ket{j}$ with
$s_j = -\pi S + j\triangle s$ and
$\delta_{\omega}$ is an approximation to $\delta$ function defined, for example, by choosing
\begin{equation*}
	\delta_{\omega}(x) = \frac{1}{\omega} \bigg(1-\frac{1}{2}|1+\cos(\pi \frac{x}{\omega})|\bigg)\quad |x|\leq \omega, \quad
	\delta_{\omega}(x) = 0\quad |x|\geq \omega.
\end{equation*}
Here $\omega = m\triangle s$, where $m$ is the number of mesh points within the support of $\delta_{\omega}$.

\subsection{Hamiltonian system for quantum computing}
As discussed in Section \ref{sub:time-dependent}, a time-dependent system can be transformed into a time-independent system by adding an additional dimension. Therefore, the subsequent discussion will focus exclusively on the time-independent case. For further details on time-dependent systems, we refer to \cite{CJL24}.

From \eqref{generalSchr}, a quantum simulation can be carried out on the Hamiltonian system above:
\begin{equation*}
	\ket{\bm{W}_h(T)} = \big[\Phi \otimes I\big]\cdot
	\mathcal{U}(T) \cdot
	\big [ \Phi ^{-1}\otimes I\big] \ket{\bm{W}_h(0)},
\end{equation*}
where $\mathcal{U}(T) = \e^{-\i H T}$ is a unitary operator, and $\Phi $ (or $\Phi^{-1}$) is completed by (inverse) quantum Fourier transform (QFT or IQFT).
The complete circuit for implementing the quantum simulation of $\ket{\bm{w}_h}$ is illustrated in Fig.~\ref{schr_circuit}.
\begin{figure}[!htb]
	\centering
	\centerline{
		\Qcircuit @C=1em @R=2em {
			\lstick{\hbox to 2.7em{$\ket{\bm{\psi}_h}$\hss}}
			& \qw
			& \gate{\text{IQFT}}
			& 	\qw
			& \multigate{1}{\mathcal{U}(T)}	
			&  \qw
			& \gate{\text{QFT}}
			& \qw	
			& \qw  &\meterB{\ket{k}}\\
			\lstick{\hbox to 2.7em{$\ket{\bm{u}_f(0)}$\hss}}
			& \qw
			& \qw
			& \qw
			& \ghost{\mathcal{U}(T)}
			& \qw
			& \qw
			& \qw
			& \qw  & \hbox to 2em{$\ket{\bm{u}_f(T)}$\hss}
		}}
	\caption{Quantum circuit for Schr\"odingerization of ~\eqref{generalSchr}, where $\bm{\psi}_h = \sum_{k\in [N_p]} \psi(p_k)\ket{k} $.}
	\label{schr_circuit}
\end{figure}

From \eqref{eq:recovery}, one can recover the target variables for $\bm{u}_f$ by performing a measurement in the computational basis:
\[M_k = \ket{k}\bra{k} \otimes I, \quad k \in \{j: p_j\geq p^{\Diamond} \;\text{and}\; p_j=\mathcal{O}(1)\}=:I_\Diamond,\]
where $I_\Diamond$ is referred to as the recovery index set.
The state vector is then collapsed to
\[ \ket{\bm{w}_*} \equiv \ket{k_*} \otimes \frac{1}{\mathcal{N}}\Big(\sum\limits_i w_{k_*i} \ket{i} \Big) , \quad
\mathcal{N} = \Big(\sum\limits_i |w_{k_* i}|^2 \Big)^{1/2},\]
where $w_{k_*,i} = \bra{k_*}\bra{i} \otimes \bb{W}_h$
for some $k_*$ in the recovery index set $I_\Diamond$ with the probability
\begin{align*}
	\text{P}_{\text{r}}(\bb{w}(T,p_{k_*}))
	= \frac{\sum_i |w_{k_*i}(T)|^2}{\sum_{k,i} |w_{ki}(T)|^2}
	= \frac{\|\bm{w_*}(T)\|^2}{\|\bb{w}_h(T)\|^2}
	=\frac{\|\bm{w_*}(T)\|^2}{\|\bb{w}_h(0)\|^2}.
\end{align*}
Then the likelihood of acquiring $\ket{\bm{w}_*}$ that satisfies $k_* \in I_\Diamond$ is given by
\begin{equation}\label{Prw}
	\text{P}_{\text{r}}(\bb{w}) = \sum_{k\in I_\Diamond} \text{P}_{\text{r}}(\bb{w}(T,p_{k}))
	= \frac{ \sum_{k\in I_\Diamond}\sum_{i\in [N]}|w_{k,i}(T)|^2}{\|\bb{w}_h(0)\|^2}
	= \frac{C_{e0}^2}{C_e^2}\frac{\|\bb{u}_f(T)\|^2}{\| \bb{u}_I \|^2},
\end{equation}
where
\begin{equation}\label{Ce0}
	C_{e0} = \Big(\sum_{p_k\in I_{\Diamond}}  (\psi(p_k))^2  \Big)^{1/2}, \qquad C_e = \Big(\sum_{k=0}^{N_p-1} (\psi(p_k))^2  \Big)^{1/2}, \qquad \psi(p) = \e^{-|p|}.
\end{equation}
If $N_p$ is sufficiently large, we have
\[\triangle p C_{e0}^2 \approx  \int_{p^\Diamond}^{\infty} \e^{-2p}   \d p
= \frac12 \e^{-2p^\Diamond} , \qquad
\triangle p C_e^2 \approx  \int_{-\infty} ^{\infty}  \e^{-2p} \d p= 1,\]
where $\triangle p = (R+L)/N_p$, then  it yields
\begin{equation}\label{cece0}
	\frac{C_{e0}^2}{C_e^2} \approx \frac12 \e^{-2p^\Diamond} \ge  \frac{1}{2}\e^{-1}.
\end{equation}
Since $\bm{u}_f(t) = \begin{bmatrix} \bm{u}(t) \\ \bm{r}_0 \end{bmatrix}$, one can perform a projection to get $\ket{\bm{u}(T)}$ with the probability $\dfrac{\|\bm{u}(T)\|^2}{\|\bm{u}_f(T)\|^2}$.
The overall probability of retrieving $\bb{u}$ is then approximated by
\[\text{P}_{\text{r}}(\bb{u}) =
\text{P}_{\text{r}}(\bb{w}) \cdot \frac{\|\bb{u}(T)\|^2}{\|\bb{u}_f(T)\|^2}
=
 \frac{C_{e0}^2}{C_e^2} \frac{\|\bm{u}(T)\|^2}{\|\bm{u}_I\|^2}
= \frac{C_{e0}^2}{C_e^2}\frac{\|\bm{u}(T)\|^2}{\|\bm{u}_0\|^2+T^2 \|\bm{b}\|_{\mathrm{smax}}^2},\]
where
\begin{equation}\label{bmax}
\|\bm{b}\|_{\mathrm{smax}}^2 = \sum_{i=0}^{N-1} \Big(\sup_{t\in [0,T]} |b_i(t)| \Big)^2.
\end{equation}

By using the amplitude amplification, the repeated times for the measurements can be approximated as
\begin{equation}\label{gtimes}
	g = \mathcal{O}\Big(\frac{C_e}{C_{e0}} \frac{\|\bm{u}_I\|} {\|\bm{u}(T)\|} \Big) = \mathcal{O}\Big(\frac{\|\bm{u}_0\|+T \|\bm{b}\|_{\mathrm{smax}}} {\|\bm{u}(T)\|}\Big).
\end{equation}
The quantity $g$ in \eqref{gtimes} is comparable to the number of repeated times by directly projecting onto $\ket{k_*}\ket{0}$ for $k_*\in I_{\Diamond}$.

\subsection{Detailed implementation of the Hamiltonian simulation}

For the Hamiltonian simulation of $ \mathcal{U}(T) = \e^{-\i H T} $, where $H$ arises from the time-dependent system, one can apply established quantum algorithms from the literature. For instance, Hamiltonian simulation with nearly optimal dependence on all parameters is discussed in \cite{Berry-Childs-Kothari-2015}, where sparse access to the Hamiltonian $ H $ is assumed.

One can express the evolution operator $ \mathcal{U}(T) $ as a select oracle
\begin{equation*}\label{selectVk}
	\mathcal{U}(T) = \sum_{k=0}^{N_p-1}  \ket{k}\bra{k} \otimes \e^{-\i (\mu_k H_1- H_2) T}
	=: \sum_{k=0}^{N_p-1} \ket{k}\bra{k} \otimes V_k(T).
\end{equation*}
Since the unitary $V_k(T)$ corresponds to the simulation of the Hamiltonian $H_{\mu_k}:=\mu_k H_1 - H_2$, we assume the block-encoding oracles encoding the real and imaginary parts separately, namely
\[(\bra{0}_a\otimes I)U_{H_i} (\ket{0}_a\otimes I) = \frac{H_i}{\alpha_i}, \qquad i = 1,2, \]
where $\alpha_i\ge \|H_i\|$ is the block-encoding factor for $i = 1,2$.

According to the discussion in \cite[Section 4.2.1]{ACL2023LCH2}, there is an oracle $\text{HAM-T}_{H_{\mu}}$ such that
\begin{equation}\label{HAMT}
	(\bra{0}_{a'} \otimes I) \text{HAM-T}_{H_\mu}(\ket{0}_{a'} \otimes I)
	= \sum_{k=0}^{N_p-1} \ket{k}\bra{k} \otimes \frac{H_{\mu_k}}{\alpha_1 \mu_{\max} + \alpha_2},
\end{equation}
where $H_{\mu_k} = \mu_k H_1 - H_2$ and $\mu_{\max} = \max_k |\mu_k|$ represent the maximum absolute value among the discrete Fourier modes. This oracle only uses $\mathcal{O}(1)$ queries to block-encoding oracles for $H_1$ and $H_2$.
With the block-encoding oracle $\text{HAM-T}_{H_\mu}$, we can implement
\[\text{SEL}_0 = \sum_{k=0}^{N_p-1} \ket{k}\bra{k} \otimes  V_k^a(T), \]
a block-encoding of $\mathcal{U}(T)$, using the quantum singular value transformation (QSVT) \cite{gilyen2019quantum} for example, where $V_k^a(T)$ block-encodes $V_k(T)$ with
\begin{equation}\label{VaErr}
\|V_k^a(T) - V_k(T)\| \le \delta.
\end{equation}
This uses the oracles for $H_1$ and $H_2$
\begin{equation}\label{eq:times for H1}
\mathcal{O}\Big( (\alpha_1 \mu_{\max} + \alpha_2) T  + \log(1/\delta)\Big)
= \mathcal{O}(\alpha_H \mu_{\max} T + \log(1/\delta))
\end{equation}
times (see \cite[Corollary 16]{ACL2023LCH2}), where $\alpha_H \ge \alpha_i$, $i=1,2$.

Applying the block-encoding circuit to the initial input state $\ket{0}_{a'}\ket{\tilde{\bm{W}}_0}$ gives
\[\text{SEL}_0\ket{0}_{a'}\ket{\tilde{\bm{W}}_0} = \ket{0}_{a'}\mathcal{U}^a(T)\ket{\tilde{\bm{W}}_0} + \ket{\bot},\]
where $\mathcal{U}^a(T)$ is the approximation of $\mathcal{U}(T)$ and $\tilde{\bm{W}}_0 = (\Phi ^{-1}\otimes I) \bm{W}_h(0)$. This step only needs one query to the state preparation oracle $O_{\tilde{w}}$ for $\tilde{\bm{W}}_0$.

According to the preceding discussions, we may conclude that there exists a unitary $V_0$ such that
\[\ket{0^{n_a}} \ket{0^w} \quad \xrightarrow{ V_0 } \quad  \frac{1}{\eta_0} \ket{0^{n_a}} \otimes \tilde{\bb{W}}_h^{a} + \ket{\bot},\]
where $\tilde{\bb{W}}_h^{a}$ is the approximate solution of $\tilde{\bm{W}}_h$, given by
\[ \tilde{\bb{W}}_h^{a} = \mathcal{U}^a(T)\tilde{\bb{W}}_0 \quad \mbox{and} \quad \eta_0 = \|\tilde{\bb{W}}_0\| = \|\bb{W}_h(0)\|\le C_e \|\bm{u}_I\|\lesssim \frac{1}{\sqrt{\triangle p}}\sqrt{\|\bb{u}_0\|^2+T^2\|\bb{b}\|_{\mathrm{smax}}^2}.\]

\subsection{An abstract complexity analysis} \label{subsec:abstract}

In this section, we focus on the complexity analysis of the Schr\"odingerization.
According to \cite[Theorem 4.4]{JLM24SchrInhom}, the error between $\bm{u}_h=\e^{p_k}(\bra{k} \otimes \bra{0} \otimes I)\bm{W}_h$ and $\bm{u}$ consists of two parts: one arises from the truncation of the extended domain used for computation in \eqref{eq: L,R,criterion}, and the other results from the spectral discretization in $p$.   Suppose $L$ and $R$ are large enough satisfying \eqref{eq: L,R,criterion}, and $\triangle p \simeq \mu_{\max}^{-1}$ is small.

The original Schr\"odingerization method exhibits first-order convergence in $p$ due to the lack of regularity of $\psi(p) = \e^{-|p|}$ in the initial data in \eqref{u2v}. Consequently, achieving precision $\epsilon$ may require a small enough mesh size $\triangle p = \mathcal{O}(\epsilon)$. This results in the maximum absolute value among the discrete Fourier modes scaling as $\mathcal{O}(1/\epsilon)$, i.e., $\mu_{\max} = \mathcal{O}(1/\epsilon)$, which is not optimal because the query complexity linearly depends on $\mu_{\max}$ as shown in \eqref{eq:times for H1}.

The parameter $ \mu_{\max}$ is proportional to the inverse of the mesh size $\Delta p$, so it should be a function of $\epsilon$. To achieve better precision scaling, the natural idea is to adopt smoother extension of the warped phase transformation $\psi$ so the discrete Fourier transform~--~which is the spectral method~--~achieves high order (up to exponential)~--~accuracy.

Here, we derive the error estimate for the Fourier spectral discretization with smooth initializations, while the detailed proof is given in Section~\ref{sec:err}.
 Let $\psi(p) \in H^r((-L,R))$ with $\psi^{(k)}(p) \approx 0$ at $p = -L, R$ for $k\le r$. Then one can apply the discrete Fourier transform to $\psi(p)$. Denote its approximation by $P_h \psi$. Noting  that $\mu_{\max} =\frac{N_p \pi}{L+R}$, the standard approximation estimate \cite{Shenspectral} yields
 \begin{equation}\label{errPh}
 \| \psi - P_h \psi \|_{L^2((-L,R))} \lesssim \Big( \frac{R+L}{N_p} \Big)^r \|\psi^{(r)}\|_{L^2((-L,R))} = \big(\frac{\pi}{\mu_{\max}}\big)^{r}\|\psi^{(r)}\|_{L^2((-L,R))}.
 \end{equation}
If we assume that the right-hand side of \eqref{errPh} is of the same order as $\epsilon$, then
\[
 \mu_{\max} \simeq \pi (1/\epsilon)^{1/r} \|\psi^{(r)}\|_{L^2((-L,R))}^{1/r}.
\]

\begin{theorem}\label{thm:err w-wh}
Let $\bb{w}(t,p)$ be the exact solution to \eqref{u2v}, and let $\bm{W}_h(t)$ denote the solution of the discrete problem \eqref{heatww}.
Assume that $\psi \in H^r(\mathbb{R})$ and decays exponentially on $\mathbb{R}$.
Suppose the mesh size $\triangle p$ satisfies
\begin{equation}\label{mumax}
 (\triangle p)^{-1} \simeq \mu_{\max} \simeq \pi (1/\epsilon)^{1/r} \|\psi^{(r)}\|_{L^2((-L,R))}^{1/r},
\end{equation}
where $L$ and $R$ are chosen according to \eqref{eq: L,R,criterion}.
Then the following error estimate holds:
\begin{equation}\label{errw}
\| \bb{w}(T,p) - \bb{w}_h(T,p) \|_{L^2((-L,R))} \lesssim  \epsilon \|\bm{u}_{I}\|,
\end{equation}
where $\bb{w}_h$ is the continuous reconstruction of $\bm{W}_h$, given by
\begin{equation}\label{interpcoeff}
\bb{w}_h(t,p) = \sum_{l=0}^{N_p-1} \tilde{\bb{w}}_{l,h}(t)\,\phi_l(p),
\qquad
\tilde{\bb{w}}_{l,h}(t) = \frac{1}{N_p} \sum_{k=0}^{N_p-1} \big((\bra{k}\otimes I)\bm{W}_h\big) \,
\e^{ - \i \mu_l (p_k+L)}.
\end{equation}
\end{theorem}

For sufficiently large $r$, we have $(1/\epsilon)^{1/r} = \mathcal{O}(1)$. For example, we can assume $\e \le (1/\epsilon)^{1/r} \le \e^2$ and obtain
\begin{equation}\label{eq:mumax}
 \mu_{\max} \simeq  \|\psi^{(r)}\|_{L^2((-L,R))}^{1/r}
 \quad \text{for} \quad \frac12 \log \frac{1}{\epsilon} \le r \le \log \frac{1}{\epsilon}.
\end{equation}
Therefore, if we modify the original function $\psi(p)$ such that
\begin{equation}\label{boundpsi1}
\|\psi^{(r)}\|_{L^2((-L,R))}^{1/r} \lesssim \log(1/\epsilon) \qquad \text{when} \quad r \simeq \log(1/\epsilon),
\end{equation}
then substituting this bound into Eq.~\eqref{eq:times for H1} may imply that the non-unitary dynamic system \eqref{ODElinear} can be simulated from $t=0$ to $t=T$, within an error of $\epsilon$, achieving $\widetilde{\mathcal{O}}( \alpha_H T \log(1/\epsilon) )$ queries to the HAM-$\text{H}_{\mu}$ oracle.

Eq.~\eqref{boundpsi1} suggests that we should establish a linear growth of $ \|\psi^{(r)}\|_{L^2((-L,R))}^{1/r} $ with respect to $ r $. However, as will be shown later, a smooth extension of $ \psi $ alone is insufficient to achieve such growth and, therefore, cannot yield the optimal cost. This is because we can only derive $ \|\psi^{(r)}\|_{L^2((-L,R))}^{1/r} \le C r^{1/\beta} $ with $ \beta \in (0,1) $, which leads to a dependence on $ \epsilon $ of $ \mathcal{O}(\log^{1/\beta}(1/\epsilon)) $. To achieve the optimal convergence rate, one would need $\beta = 1$.

The following theorem establishes an abstract framework for the complexity analysis of the Schr\"odingerization method.

\begin{theorem}\label{thm:complexity}
Let $\varepsilon$ be a positive constant. Suppose that $L$ and $R$ satisfy the condition in \eqref{eq: L,R,criterion}.
In addition, we assume that the function $\psi\in H^r(\mathbb{R})$ in the initial data of \eqref{u2v}  decays exponentially on $\mathbb{R}$  and satisfies
\[\|\psi^{(r)}\|_{L^2((-L,R))}^{1/r} \le C r^{1/\beta}, \quad \beta \in (0,1], \]
where $r \simeq \log(1/\epsilon)$, $C$ is a constant independent of $\epsilon$,
and the inverse of the mesh size satisfies \eqref{mumax}. Here,
 $\epsilon$ satisfies
\[\epsilon \simeq \frac{\epsilon'}{(\log(1/\epsilon'))^{1/(2\beta)}}, \qquad \epsilon' =\frac{\varepsilon\|\bm{u}(T)\|}{\|\bm{u}_{I}\|}, \qquad
\|\bm{u}_{I}\| \simeq \|\bm{u}(0)\|+T\|\bm{b}\|_{\mathrm{smax}}, \]
 with $\|\bm{b}\|_{\mathrm{smax}}$ defined in \eqref{bmax}. This implies
\[\mu_{\max} \lesssim \Big(\log \frac{\|\bm{u}_{I}\|}{\varepsilon \|\bm{u}(T)\|}\Big)^{1/\beta}.\]
Then, there exists a quantum algorithm that prepares an $\varepsilon$-approximation of the state $\ket{\bm{u}(T)}$ with
	$\Omega(1)$ success probability and a flag indicating success, using
\[\mathcal{O}\Big( \frac{\|\bm{u}_{I}\| } {\|\bm{u}(T)\|}\alpha_H T \Big(\log \frac{\|\bm{u}_{I}\|}{\varepsilon \|\bm{u}(T)\|}\Big)^{1/\beta} \Big)\]
queries to the $\text{HAM-T}_{H_\mu}$ oracle, where $\alpha_H \geq \|H_i\|, i=1,2$, and using
	\begin{equation*}
	\mathcal{O}\Big(\frac{\|\bm{u}_{I}\| } {\|\bm{u}(T)\|}\Big)
	\end{equation*}
	queries to the state preparation oracle for $\tilde{\bm{w}}_0$.
\end{theorem}
\begin{proof}
Let $\bb{W}_h(T)$ and $\bb{W}_h^a(T)$ be the solutions associated with $\mathcal{U}$ and $\mathcal{U}^a$, respectively.
According to \eqref{eq:recovery}, one has
\[\bm{u}(T) = \e^{p_k}(\bra{k}\otimes \bra{0} \otimes I)\bm{W}(T), \qquad
\bm{u}_h^a(T) = \e^{p_k}(\bra{k}\otimes \bra{0} \otimes I)\bm{W}^a_h(T)\]
for some $k \in I_{\Diamond}$, where $\bm{W}(t) = \sum_{ki} w_i(t,p_k) \ket{k,i}$. Here, we can choose $p_k = \mathcal{O}(1)$.
Then we need to bound the error between $\ket{\bb{u}(T)}$ and $\ket{\bb{u}_h^a(T)}$.  Using the inequality $\| \frac{\bb{x}}{\|\bb{x}\|} - \frac{\bb{y}}{\|\bb{y}\|} \| \le 2 \frac{\|\bb{x} - \bb{y}\|}{\|\bb{x}\|}$ for two vectors $ \bb{x}, \bb{y} $, we obtain
\[\|\ket{\bm{u}(T)}-\ket{\bm{u}_h^a(T)}\| \le 2 \frac{\|\bm{u}(T) - \bm{u}_h^a(T)\|}{\|\bm{u}(T)\|}.\]
 This gives
\[\|\bm{u}(T) - \bm{u}_h^a(T)\| \le \e^{p_k} \| \bb{W}(T) - \bb{W}_h^a(T) \|, \qquad  k \in I_{\Diamond}.\]

By the triangle inequality,
\[\| \bb{W}(T) - \bb{W}_h^a(T) \| \le \| \bb{W}(T) - \bb{W}_h(T) \| + \| \bb{W}_h(T) - \bb{W}_h^a(T) \| = : \varepsilon_1 + \varepsilon_2.\]
For $\varepsilon_1$, one has
\begin{align*}
\varepsilon_1
& = \| \bb{W}(T) - \bb{W}_h(T) \| = (\sum_{k=0}^{N_p-1} \|\bb{w}(T,p_k) - \bb{w}_h(T,p_k)\|^2)^{1/2} \\
& \lesssim \frac{1}{\sqrt{\Delta p}} \| \bb{w}(T,p) - \bb{w}_h(T,p) \|_{L^2((-L,R))} \lesssim \mu_{\max}^{1/2} \epsilon \|\bm{u}_{I}\|,
\end{align*}
where we have used the estimate \eqref{errw}.
For $\varepsilon_2$, assuming $\|V_k^a(T) - V_k(T)\| \le \delta$, there holds
\begin{align*}
\varepsilon_2
 = \| \bb{W}_h(T) - \bb{W}_h^a(T) \|
\le \| \mathcal{U} - \mathcal{U}^a \|  \|\bb{W}_h(0)\|
\le \delta \|\bb{\psi}\|  \|\bb{u}_{I}\| \lesssim \mu_{\max}^{1/2} \delta \|\bb{u}_{I}\|.
\end{align*}
Therefore,
\[\|\ket{\bm{u}(T)}-\ket{\bm{u}_h^a(T)}\| \lesssim \mu_{\max}^{1/2} (\epsilon + \delta) \frac{\|\bm{u}_{I}\|}{\|\bm{u}(T)\|}.\]

The condition on $\psi$ implies that $\mu_{\max} \lesssim \log^{1/\beta}(1/\epsilon)$.  Given the above equations, we can require
\[\Big(\log\frac{1}{\epsilon}\Big)^{1/(2\beta)} \epsilon \frac{\|\bm{u}_{I}\|}{\|\bm{u}(T)\|} \simeq \frac{\varepsilon}{2}, \qquad
\Big(\log\frac{1}{\epsilon}\Big)^{1/(2\beta)} \delta  \frac{ \|\bb{u}_{I}\|}{\|\bm{u}(T)\|} \simeq \frac{\varepsilon}{2}.\]
From the first equation we can get
\[\epsilon \simeq \frac{\varepsilon'}{(\log(1/\epsilon'))^{1/(2\beta)}}, \qquad \epsilon' =\frac{\varepsilon\|\bm{u}(T)\|}{\|\bm{u}_{I}\|}. \]
This yields
\[\mu_{\max}
 \lesssim \Big(\log \frac{(\log(1/\epsilon'))^{1/(2\beta)}} {\varepsilon'} \Big)^{1/\beta}
\simeq \Big(\log \frac{\|\bm{u}_{I}\|}{\varepsilon \|\bm{u}(T)\|}\Big)^{1/\beta}, \]
\[\frac{1}{\delta} \simeq \Big(\log \frac{\|\bm{u}_{I}\|}{\varepsilon \|\bm{u}(T)\|}\Big)^{1/\beta} \frac{ \|\bm{u}_{I}\| }{\varepsilon \|\bm{u}(T)\|}.\]
Plugging the above quantities into \eqref{eq:times for H1}, we obtain
\begin{align*}
\alpha_H T \mu_{\max}+ \log \frac{1}{\delta}
=\mathcal{O}\Big( \alpha_H T \Big(\log \frac{\|\bm{u}_{I}\|}{\varepsilon \|\bm{u}(T)\|}\Big)^{1/\beta} \Big) .
\end{align*}
The proof is finished by multiplying the repeated times shown in \eqref{gtimes}.
\end{proof}

\begin{remark}
From this theorem, we observe that the optimal complexity is achieved when $\beta = 1$. However, we should relax the assumption that $\psi(p) = \e^{-p}$ for $p > 0$, and instead pursue an approximation, as discussed later. Therefore, $\bm{u}_f$ is not exactly $\e^p \bm{w}$, but rather an approximation of $\bm{u}_f$. In Theorem \ref{thm:recovery2}, we provide an error estimate related to this approximation. Taking this error into account still does not affect our final complexity analysis.
\end{remark}

\section{Near-optimal dependence with smooth initializations} \label{sec:smoothmethod}

In this section, we demonstrate that the smooth extension to $p < 0$ for $\e^{-p}$ is sufficient to ensure near-optimal dependence on matrix queries. However, as mentioned earlier, the smooth extension alone does not guarantee optimal precision dependence.  We present three methods for constructing such a smooth extension.

\subsection{The cut-off function}\label{subsec:cut-off func}

\begin{figure}[!htb]
  \centering
  \includegraphics[scale=0.5]{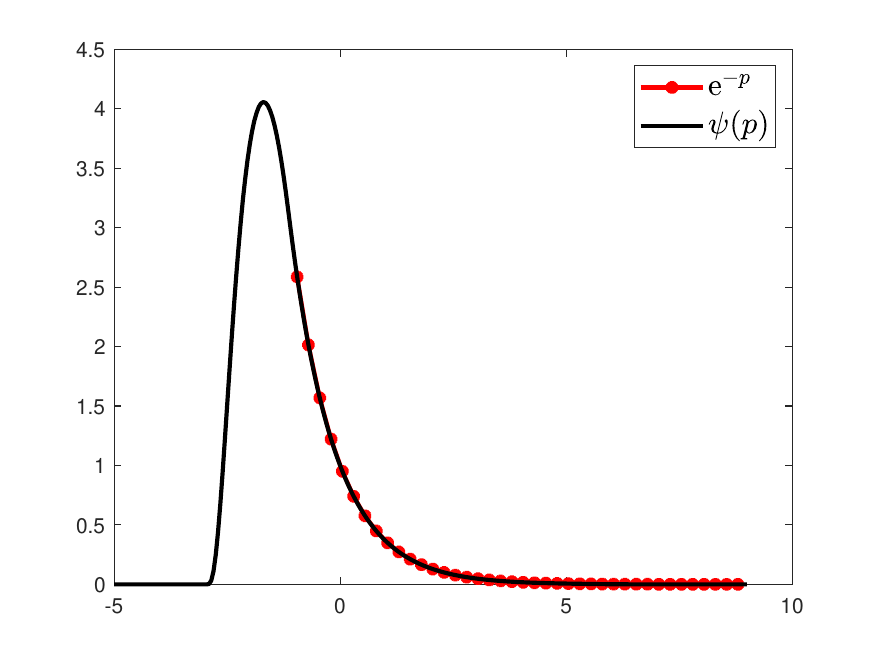}\\
  \caption{A smooth extension of $\e^{-p}$} \label{fig:smooth}
\end{figure}

As illustrated in Fig.~\ref{fig:smooth}, we seek a smooth function $\psi(p)$ such that $\psi(p) = \e^{-p}$ for $p > 0$, and it decays rapidly on the negative axis. This problem can be reduced to finding a smooth approximation $\zeta(p)$ of the step function
\begin{equation}\label{stepfunc}
H(p) = \begin{cases}
0, & p < 0, \\
1, & p \geq 0,
\end{cases}
\end{equation}
and then defining $\psi(p) = \zeta(p) \e^{-p}$.
Given that the discrete Fourier transform requires periodic boundary conditions and recognizing the exponential decay of $\e^{-p}$ for positive $p$, we further require $\zeta(p)$ to have compact support. This leads us to consider a cut-off function, which ensures the smoothness of the step function and provides compact support via convolution.
The convolution operation is a widely used method for smoothing the step function. It generates various smooth approximations by selecting an appropriate convolution kernel. The cut-off function is a prime example of this approach and has become a fundamental tool in the analysis of partial differential equations. In this subsection, we apply the cut-off technique to construct the desired smooth extension.

We begin by recalling the mollifier, defined as
\begin{equation}\label{mollifier}
\eta(p) = \begin{cases}
  \frac{1}{C}\exp \left( \frac{1}{| p |^2 - 1} \right), & \quad | p | < 1  \\
  0, & \quad | p | \ge 1
\end{cases}, \qquad C = \int_{B_1} \exp\left(\frac{1}{|p|^2 - 1}\right) \mathrm{d}p,
\end{equation}
where $ B_1 $ denotes the unit ball in $\mathbb{R}^n$ and $ C $ is the normalization constant ensuring $ \int_{\mathbb{R}^n} \eta(p) \, \d p = 1 $. This function belongs to $ C_0^\infty(\mathbb{R}^n) $ with support $ \overline{B_1} $.

\begin{lemma}\label{lem:etak}
The mollifier satisfies the following estimate for its derivatives in one dimension:
\begin{equation}\label{mollifierbound}
|\eta^{(k)} (p) | \lesssim C(k): = 20^k k!\e^{-2k} (2k)^{2k},\quad \forall p\in \mathbb{R}.
\end{equation}
\end{lemma}

\begin{proof}
We only need to consider $\theta(p) = C\eta(p)$. A direct calculation gives
\[\theta^{(k)}(p) = Q_k(p)(1 - p^2)^{-2k}\exp\left(\frac{1}{|p|^2 - 1}\right), \qquad |p| < 1.\]
where $Q_k$ is a polynomial and can be recursively defined by
\[Q_0(p) = 1,\quad Q_{k+1}(p) = (1 - p^2)^2 Q'_k(p) + 2p(2k - 1 - 2kp^2) Q_k(p).\]
Let $t = \frac{1}{1 - p^2} \in (1, \infty)$.
For $k \ge 1$, one has
\[ \theta^{(k)}(p) = Q_k(p) \e^{-t} t^{2k}, \quad |p| < 1, \quad 1 < t < \infty.\]
Since $\e^{-t} t^{2k}$ achieves the maximum value at $t = 2k$ when $t>1$, we have
\[ |\theta^{(k)}(p)| \le |Q_k(p)|  \e^{-2k} (2k)^{2k}.\]
It is simple to find that $Q_k(p)$ is a polynomial of order $3k$. Let
\[Q_k(p) = \sum_{j=0}^{3k} a_{k,j}p^j, \qquad |p|<1,\]
and we define the maximum coefficient in absolute value as
\[A_k = \max_{0\le j \le 3k} |a_{k,j}|, \qquad k = 1,2,\cdots\]
Then it holds
\[|Q_k(p)| \le (3k+1) A_k, \qquad k = 1,2, \cdots\]

Through careful calculation, it is found that
\begin{align*}
(1-p^2)^2 Q_k'(p)
& = (1-2p^2 + p^4)\sum_{j=0}^{3k-1} (j+1) a_{k,j+1}p^j
 =: \sum_{m=0}^{3k+3} S(m) p^m,
\end{align*}
where $S(m)$ is obtained by collecting powers of $p^m$. Similarly,
\[
2p(2k-1 - 2kp^2) Q_k(p) =: \sum_{m=0}^{3k+3} T(m) p^m.
\]

By examining the recursive formulas, it is clear that each coefficient satisfies
\[
|S(m)| \lesssim 12 k A_k, \quad |T(m)| \lesssim 8 k A_k, \quad m \le 3k+3.
\]
Hence
\[
A_{k+1} \le 20 k A_k, \quad k = 1,2,\cdots
\]
with $A_1 = 2$. By induction, we have
\[
A_k \le 2 \cdot 20^{k-1} (k-1)!.
\]
Therefore,
\[
|Q_k(p)| \le (3k+1) A_k \le 20^k k!,
\]
up to a universal multiplicative constant. Combining with the previous estimate yields \eqref{mollifierbound}.

\end{proof}

For any $ \varepsilon > 0 $, we can rescale the function such that its support becomes $ \overline{B_\varepsilon} $, a closed ball of radius $ \varepsilon $. The rescaled function is given by $\eta_\varepsilon(p) = \frac{1}{\varepsilon^n} \eta \left( \frac{p}{\varepsilon} \right)$.
For a function $ u \in L_{\text{loc}}^1(\Omega) $, the mollifier operator $ J_\varepsilon $ is defined through convolution as
\begin{equation}\label{eq:mollifier1}
 J_\varepsilon u(p) = (\eta_\varepsilon * u)(p) = \int_{\Omega} \eta_\varepsilon(p - y) u(y) \d y
= \int_{B_\varepsilon(p)} \eta_\varepsilon(p - y) u(y) \d y, \quad p \in \Omega_\varepsilon,
\end{equation}
where the domain $ \Omega_\varepsilon $ is defined by
\[ \Omega_\varepsilon = \left\{ p \in \Omega : \overline{B_\varepsilon(p)} \subset \Omega \right\} = \left\{ p \in \Omega : \text{dist}(p,\partial \Omega) > \varepsilon \right\}. \]
It can be verified that $ J_\varepsilon u \in C^{\infty}(\Omega_\varepsilon) $ for every $ u \in L_{\text{loc}}^1(\Omega) $. Furthermore, if $ \text{supp} \{ u \} \Subset \Omega $, denoting $ \delta = \text{dist}(\text{supp} \{ u \}, \partial \Omega) $, then for $ \varepsilon < \delta/4 $, we have $ J_\varepsilon u \in C_0^\infty (\Omega) $ with $ \text{supp} \{ J_\varepsilon u \} \subset \Omega_\varepsilon $.

\begin{figure}[!htb]
  \centering
  \includegraphics[scale=0.5]{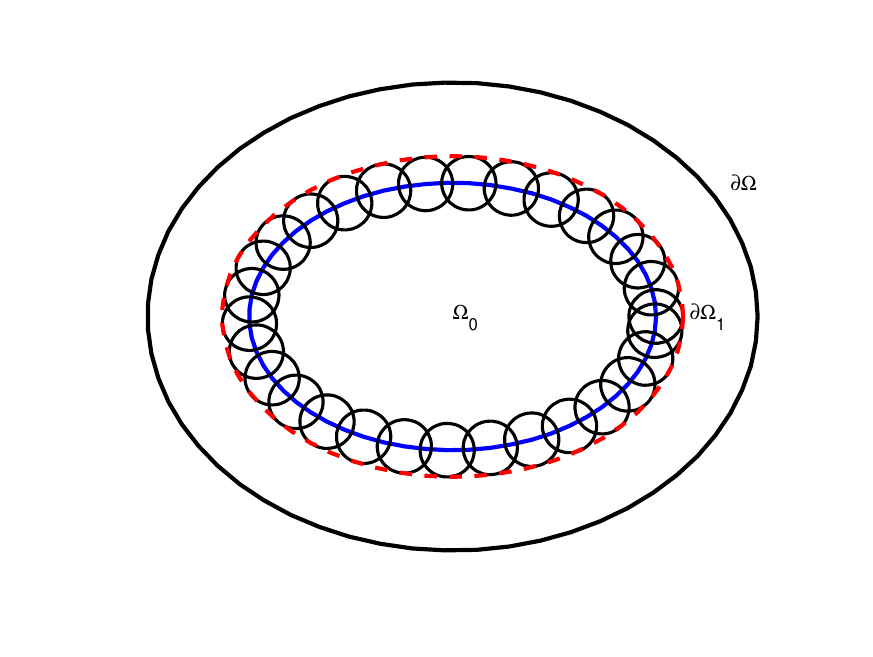}\\[-0.9cm]
  \caption{A snapshot of the domain of the cut-off function in $\mathbb{R}^2$} \label{fig:cutoff}
\end{figure}

Now we are ready to describe the construction of the cut-off function with the domain illustrated in Fig.~\ref{fig:cutoff}.

\begin{lemma}[cut-off function]
Let $\Omega \subset \mathbb{R}^n$ be a non-empty open set, and ${\Omega _0} \Subset \Omega $. Define
\[
\delta = \text{dist}(\Omega _0, \partial \Omega), \quad d = \frac{\delta}{4}, \quad \Omega_1 = \{ p \in \Omega : \text{dist}(p, \Omega _0) < d \}.
\]
Let $\phi(p) = \chi _{\Omega _1}(p)$ denote the indicator function of $\Omega_1$. Then $\zeta = J_d{\phi}$ satisfies
\[
\begin{cases}
 \zeta \in C_0^\infty (\Omega ), & \text{supp} \{ \zeta \} \subset K_d, \\
\zeta (p) \equiv 1, & p \in \Omega _0, \\
0 \le \zeta (p) \le 1, & p \in \Omega ,
\end{cases}
\]
where
\[
{K_d} = \{ p \in \Omega : \text{dist}(p, \Omega _1) \le d \} = \{ p \in \Omega : \text{dist}(p, \Omega _0) \le 2d \}.
\]
The function $\zeta$ is referred to as the cut-off function relative to the subset $\Omega _0$ in $\Omega$.
\end{lemma}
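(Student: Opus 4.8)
The plan is to verify the three asserted properties of $\zeta = J_d \phi$ one at a time, using only the basic facts about mollifiers recalled just before the lemma. First I would record the inclusions between the sets involved: since $\Omega_0 \Subset \Omega$ with $\operatorname{dist}(\Omega_0,\partial\Omega)=\delta=4d$, the set $\Omega_1=\{x:\operatorname{dist}(x,\Omega_0)<d\}$ satisfies $\Omega_1 \Subset \Omega$ with $\operatorname{dist}(\Omega_1,\partial\Omega)\ge 3d$, and $\operatorname{dist}(\operatorname{supp}\phi,\partial\Omega)=\operatorname{dist}(\overline{\Omega_1},\partial\Omega)\ge 3d > 4\cdot(d/4)$... wait, I need $\varepsilon < \delta'/4$ where $\delta'=\operatorname{dist}(\operatorname{supp}\phi,\partial\Omega)\ge 3d$; indeed $d < 3d/4$ is false, so instead I would invoke the mollifier facts with the \emph{operative} radius $d$ directly: $J_d\phi$ is defined on $\Omega_d=\{x:\operatorname{dist}(x,\partial\Omega)>d\}$, and since $\operatorname{supp}\phi=\overline{\Omega_1}$ lies at distance $\ge 3d$ from $\partial\Omega$, the convolution $J_d\phi(x)$ vanishes whenever $\operatorname{dist}(x,\overline{\Omega_1})>d$, hence $\operatorname{supp}\{J_d\phi\}\subset K_d$ and $K_d$ itself sits at distance $\ge 2d$ from $\partial\Omega$, so $K_d\Subset\Omega$. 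Combined with $J_d\phi\in C^\infty(\Omega_d)$ and the fact that it extends by $0$ outside $K_d\subset\Omega_d$ to a $C^\infty$ function on all of $\mathbb R^n$, this gives $\zeta\in C_0^\infty(\Omega)$ with $\operatorname{supp}\{\zeta\}\subset K_d$.

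Next I would prove $\zeta\equiv 1$ on $\Omega_0$. For $x\in\Omega_0$, the ball $B_d(x)$ is entirely contained in $\Omega_1$ (any $y$ with $|y-x|<d$ has $\operatorname{dist}(y,\Omega_0)<d$), so $\phi\equiv 1$ on $B_d(x)$, and therefore
\[
\zeta(x) = \int_{B_d(x)} \eta_d(x-y)\phi(y)\,\d y = \int_{B_d(x)} \eta_d(x-y)\,\d y = \int_{\mathbb R^n}\eta_d(z)\,\d z = 1,
\]
using the normalization $\int \eta_\varepsilon = 1$ and the support of $\eta_d$. Then the bound $0\le\zeta\le 1$ on $\Omega$ (really on $\Omega_d$, and $\zeta=0$ off $\Omega_d\cap\Omega$) follows because $\eta_d\ge 0$ and $0\le\phi\le 1$ pointwise, so $0 = \int \eta_d\cdot 0 \le \int \eta_d(x-y)\phi(y)\,\d y \le \int \eta_d(x-y)\,\d y = 1$. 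Finally I would verify the claimed identity $K_d=\{x:\operatorname{dist}(x,\Omega_1)\le d\}=\{x:\operatorname{dist}(x,\Omega_0)\le 2d\}$, which is an elementary triangle-inequality computation: $\operatorname{dist}(x,\Omega_1)\le d$ iff there is $y$ with $\operatorname{dist}(y,\Omega_0)<d$ and $|x-y|\le d$, and taking infima gives $\operatorname{dist}(x,\Omega_0)\le 2d$; conversely if $\operatorname{dist}(x,\Omega_0)\le 2d$ one picks a near-minimizer and a point on the segment to land in the closure of $\Omega_1$ within distance $d$.

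The only genuine subtlety — the step I expect to be the main obstacle — is bookkeeping the interplay between "open/closed" and "$<$ vs $\le$" in the set definitions so that the mollifier lemma applies cleanly: one must check that $\operatorname{supp}\phi$ is compactly contained in $\Omega$ with enough clearance ($\ge 3d$) for the radius-$d$ mollification to keep the result supported well inside $\Omega$, and that $\zeta$ is genuinely $C^\infty$ across $\partial\Omega_d$ (it is, being identically zero in a neighborhood of that boundary since $K_d$ is a positive distance from $\partial\Omega\supset\partial\Omega_d$). Everything else is a direct application of the properties of $J_\varepsilon$ stated in the excerpt together with elementary distance-function manipulations, so I would keep those parts terse.
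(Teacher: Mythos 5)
The paper states this lemma without proof (it is the standard cut-off construction, cf.\ Adams or Evans), so there is no paper proof to compare against; your proof is correct and is the usual argument. You rightly notice that the clearance hypothesis $\varepsilon < \delta'/4$ in the paper's mollifier recap is \emph{not} satisfied with $\varepsilon=d$, since $\delta'=\operatorname{dist}(\overline{\Omega_1},\partial\Omega)$ may be as small as $3d$, so that statement cannot be cited verbatim; your direct fallback---$\operatorname{supp}\{J_d\phi\}\subset K_d$, $K_d$ sits at distance $\ge 2d$ from $\partial\Omega$ and hence $\ge d$ from $\partial\Omega_d$, so the zero extension across $\partial\Omega_d$ is $C^\infty$ and $K_d$ is compact in $\Omega$---is exactly right. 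The remaining steps are also handled correctly: $\zeta\equiv 1$ on $\Omega_0$ (via $B_d(x)\subset\Omega_1$ together with $\int\eta_d=1$), the bound $0\le\zeta\le 1$ from positivity of $\eta_d$ and $0\le\phi\le 1$, and the identity $K_d=\{x:\operatorname{dist}(x,\Omega_0)\le 2d\}$, where the ``$\supseteq$'' direction does indeed require the near-minimizer limiting argument you sketch because $\Omega_1$ is defined with a strict inequality.
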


The one-dimensional cut-off function satisfies the following estimate for its derivatives \cite{gilbarg2001elliptic,evans2010partial}:
\begin{equation}\label{zetak}
| \zeta^{(k)}(p) | \lesssim \frac{C(k)}{d^k},
\end{equation}
where $C(k)$ is defined in \eqref{mollifierbound}.

For the Schr\"odingerization method, we set $\Omega_0 = (-1,R)$ and $d \ge 1$. Let
\begin{equation}\label{cutoffmethod}
    \psi(p) = \zeta(p) \e^{-p}.
\end{equation}
Then it holds that $\text{supp}\{\psi\} \subset [-(1+2d), R+2d]$. The cut-off function and the resulting smooth extension are shown in Fig.~\ref{fig:cutoff1} for $R=5$ and $d=1$.

\begin{theorem}
For any $\epsilon>0$, let $d = r \simeq \log(1/\epsilon)$. Then the smooth initialization function $\psi(p)$ defined in \eqref{cutoffmethod} satisfies
\[\|\psi^{(r)}\|_{L^2(\mathbb{R})}^{1/r} \lesssim  r^2 \simeq \log^2 (1/\epsilon). \]
\end{theorem}
\begin{proof}
Let $\xi^{(k)}(p) = \zeta^{(k)}(p) \e^{-p}$. Noting that
\[\psi^{(r)}(p) = \sum_{k=0}^r C_r^k \zeta^{(k)}(p) (\e^{-p})^{(r-k)} = \sum_{k=0}^r (-1)^{r-k} C_r^k \zeta^{(k)}(p) \e^{-p},\]
by careful calculation, for any $p\in\mathbb{R}$, one gets
\begin{align}
|\psi^{(r)}(p)|
& \le ((C_r^0)^2 + \cdots +(C_r^r)^2)^{1/2} ( |\xi^{(0)}|^2 + \cdots + |\xi^{(r)}|^2 )^{1/2} \nonumber\\
& = (C_{2r}^r)^{1/2} ( |\xi^{(0)}|^2 + \cdots + |\xi^{(r)}|^2 )^{1/2} \le 2^r ( |\xi^{(0)}|^2 + \cdots + |\xi^{(r)}|^2 )^{1/2}, \label{psirbound}
\end{align}
where the combinatorial equality can be found in \cite{Stanley2011Enumerative}. In addition, we have used the Cauchy-Schwarz inequality and the fact that $C_{2r}^r \le \sum_{k=0}^{2r} C_{2r}^k = (1+1)^{2r} = 4^r$. According to \eqref{mollifierbound} and \eqref{cutoffmethod}, if we take $d = r$, then there holds
\begin{align*}
|\psi^{(r)}(p)|
& \le 2^r r^{1/2} \max_{0\le k\leq r} |\zeta^{(k)}(p) \e^{-p}| \lesssim 2^r r^{1/2}\max_{0\le k\leq r} \frac{C(k)}{d^k} \e^{-p}\\
& =  2^r r^{1/2} \max_{0\le k\leq r} \frac{20^k k!\e^{-2k} (2k)^{2k}}{d^k} \e^{-p} \le 40^r r^{1/2}(2r)^{2r}.
\end{align*}
The above equation together with $\text{supp}\{\psi\} \subset [-(1+2d), R+2d]$ and $d=r$ yields
\begin{align*}
\int_{\mathbb{R}} |\psi^{(r)}(p)|^2 \d p
\lesssim (40^r r^{1/2}(2r)^{2r})^2 \int_{-(1+2d)}^{\infty} \e^{-2p} \d p \lesssim (40^r r^{1/2}(2r)^{2r})^2 \e^{2r}.
\end{align*}
Therefore, by taking $r \simeq \log (1/\epsilon)$, we obtain
\[\|\psi^{(r)}\|_{L^2(\mathbb{R})}^{1/r} \lesssim (40^r r^{1/2} (2r)^{2r} )^{1/r} \lesssim r^2\lesssim \log^2 (1/\epsilon). \]
This completes the proof.
\end{proof}

According to Theorem \ref{thm:complexity}, the above result implies a sub-optimal precision dependence.
Ref.~\cite{Johnson15bump} states that the mollifier $ \eta(p) $ decays in the Fourier domain asymptotically as $ \hat{\eta}(w) = (|w|^{-3/4}) \e^{-\sqrt{|w|}} $. This exhibits super-polynomial decay, as the exponent involves a square root of $ |w| $, rather than exponential decay. This corresponds to the case $ \beta = 1/2 $ in \cite{ACL2023LCH2}, which consequently leads to $ \mathcal{O}(\log^2 (1/\epsilon)) $ for the LCHS method in terms of precision.

\begin{figure}[!htb]
  \centering
  \includegraphics[scale=0.5]{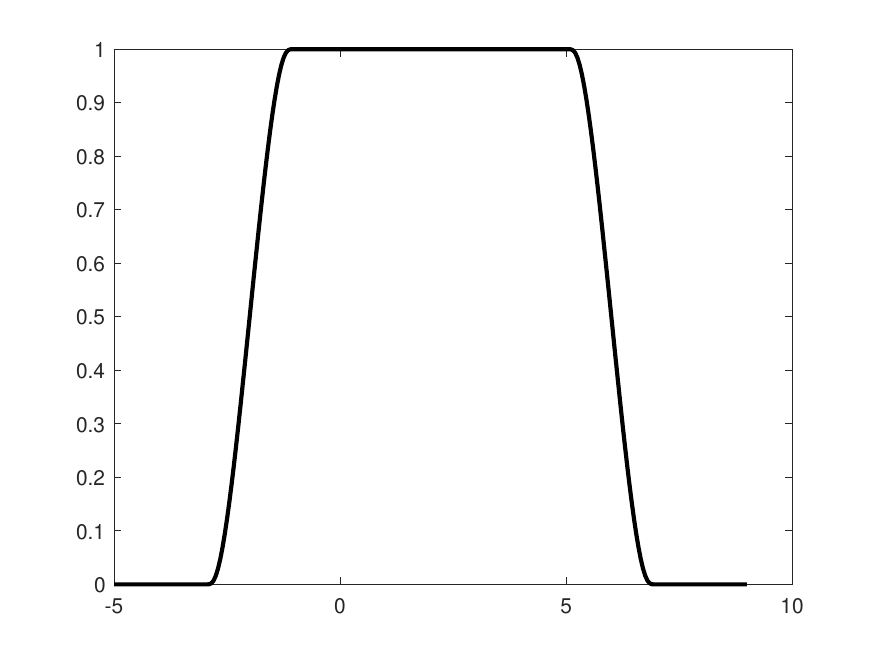}
  \includegraphics[scale=0.5]{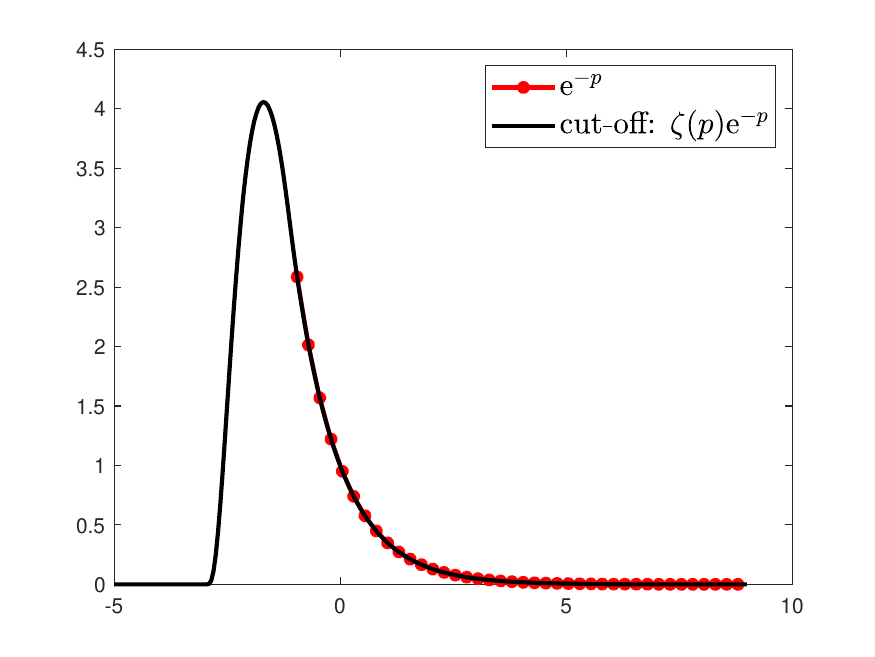}\\
  \caption{The cut-off function and the resulting smooth extension} \label{fig:cutoff1}
\end{figure}

\subsection{Higher-order interpolation}

An alternative approach to constructing the smooth function in the extended domain is to utilize high-order interpolations.
To do so, we rewrite the initial function  as
\[
\psi(p) = \begin{cases}
h(p), &  p \in (-\infty, 0], \\
\e^{-p}, & p \in (0, \infty),
\end{cases}
\]
where $h(p)$ is defined by
\begin{equation}
	h(p) = \Cp_{2r-1} (p),\quad p\in [-1,0],   \quad \quad   h(p) = \e^{p},\quad p\in (-\infty,-1).
\end{equation}
Here, $\Cp_{2r-1} (p)$ is a Hermite interpolation polynomial whose degree does not exceed  $(2r-1)$ \cite[Section~2.1.5]{SBBGW80},  satisfying
\begin{equation*}
	\begin{aligned}
		\big(\partial_{p^{k}}\Cp_{2r-1}(p)\big)|_{p=0}&=\big(\partial_{p^{k}}(\e^{-p})\big)|_{p=0} =(-1)^{k},\\
		\big(\partial_{p^{k}}\Cp_{2r-1}(p)\big)|_{p=-1}&=\big(\partial_{p^{k}}(\e^{p})\big)|_{p=-1}=\e^{-1},
	\end{aligned}
\end{equation*}
where $0\leq k\leq r-1$ is an integer.
It is simple to check that $\psi\in C^{r-1}(\bbR)$ and $\psi(p) \in H^{r}((-L,R))$ after restricting the extended domain to a limited interval.
The explicit formula of $\Cp_{2r-1}(p)$ is given by
\begin{equation*}
	\Cp_{2r-1}(p) = \sum_{k=0}^{r-1}\big(\e^{-1}L_{0k}(p) + (-1)^k L_{1k}(p)\big),
\end{equation*}
where $L_{ik}$ are generalized Lagrange polynomials defined recursively for $k=r-2$, $r-3$, $\cdots$, 0,
\begin{equation*}
	L_{0k}(p) := l_{0k}(p) - \sum_{\nu =k+1}^{r-1} l_{0k}^{(\nu)}(-1) L_{0\nu}(p),\quad
	L_{1k}(p) := l_{1k}(p) - \sum_{\nu =k+1}^{r-1} l_{1k}^{(\nu)}(0) L_{1\nu}(p),
	\end{equation*}
 with the starting polynomial for $k=r-1$
 \begin{equation*}
     L_{0r-1}(p) := l_{0r-1}(p),\qquad
     L_{1r-1}(p) := l_{1r-1}(p).
 \end{equation*}
The auxiliary polynomials  are
\begin{equation*}
	l_{0k}(p):= \frac{(-1)^r (p+1)^{k}p^r}{k!},\qquad
	l_{1k}(p):= \frac{p^k(p+1)^r}{k!}.
\end{equation*}

According to the discussion in \cite{JLM24SchrBackward}, the target variable $\bb{u}(t)=\e^p \bb{v}(t,p)$ still holds for all $p>0$, since we do not care the solution when $p<0$.  We provide the snapshots of $\psi$ for $r = 2,4,6,8,10$ in Fig.~\ref{fig:high_order inter}.
\begin{figure}
\centering
	\includegraphics[width=0.5\linewidth]{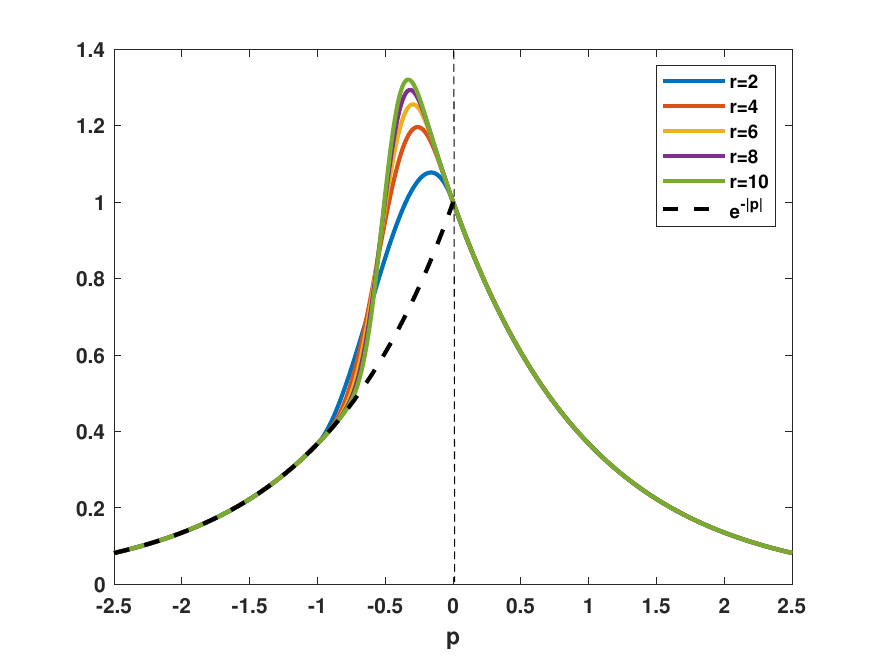}
\caption{The smooth initial data of $\psi(p)$ by using high-order interpolation.}
\label{fig:high_order inter}
\end{figure}

By employing the mollifier technique described in Section~\ref{subsec:cut-off func}, we can identify a smooth function $\varphi \in C^{\infty}(\mathbb{R}) $ such that $\varphi(p) \big|_{(-\infty,-1) \cup (0,\infty)} = \e^{-|p|}$. Consequently, $\Cp_{2r-1}(p)$ can be interpreted as an interpolation of $\varphi$.
Given that $\|\varphi^{(r)}\|_{L^{\infty}(-1,0)}$ is bounded, the $H^r$ norm of $\psi$ over the finite interval $[-L,R]$ remains bounded.
Unfortunately, while $\psi$ is contained in $H^r$ with any fixed $r\geq 1$, its exclusion from the $C^{\infty}$-class necessarily restricts the achievable complexity to a quasi-optimal order.
For the Schr\"odingerization method, the truncation of the extended domain must satisfy \eqref{eq: L,R,criterion}, it follows that $\bm{w}(t,-L)\approx \bm{w}(t,R)\approx 0$ \cite{JLM24SchrInhom}. Therefore, spectral methods can be used.

\subsection{Fourier transform} \label{subsec:FourierSmooth}

Building upon similar principles and utilizing the continuous Fourier transform in $ p $, \cite{ALL2023LCH} introduces an algorithm for implementing Linear Combination of Hamiltonian Simulation (LCHS).
The original LCHS approach in \cite{ALL2023LCH} is also a first-order method due to the slow decay rate of the integrand as a function of $ k $, where $ k $ is the continuous Fourier mode. In the continuous scenario, the integrand function with respect to $ k $ is the Fourier transform of $ \psi(p) = \e^{-|p|} $, given by $ \frac{1}{\pi(1+k^2)} $. It decays only quadratically, necessitating the truncation interval choice of $ [-K,K] $ with $ K = \log(1/\varepsilon) $. This introduces a computational overhead of $ \mathcal{O}(1/\varepsilon) $, as $ k H_1(s) - H_2(s) $ may have a spectral norm as large as $ K \|H_1(s)\| $.
This limitation was addressed in \cite{ACL2023LCH2} by replacing the original integrand with a new kernel function decaying at a near-exponential rate $ \e^{-c |k|^\beta} $, where $ \beta \in (0,1) $. Consequently, they no longer need to truncate the interval at $ K = \mathcal{O}(1/\varepsilon) $, and instead use the much smaller cutoff $ K = (\log(1/\varepsilon))^{1/\beta} $. The improved LCHS method requires
\[ \widetilde{\mathcal{O}}\left( \frac{\|\bb{u}(0)\| + \|\bb{b}\|_{L^1}}{\|\bb{u}(T)\|} \alpha_H T (\log(1/\varepsilon))^{\gamma} \right) \]
queries to the HAM-T oracle, where $\|\bm{b}\|_{L^1}=\int_0^T \|\bm{b}(s)\|\d s$, and
$\gamma = 1 + 1/\beta$ and $\beta$ for linear systems with time-dependent and time-independent coefficients, respectively. This leads to an exponential reduction in the Hamiltonian simulation time with respect to $ \varepsilon $ compared to the original LCHS. Since $ \beta \in (0,1) $, it holds that $ 1/\beta > 1 $, indicating sub-optimal behavior with respect to queries to the HAM-T oracle.

\begin{figure}[!htb]
  \centering
  \includegraphics[scale=0.25]{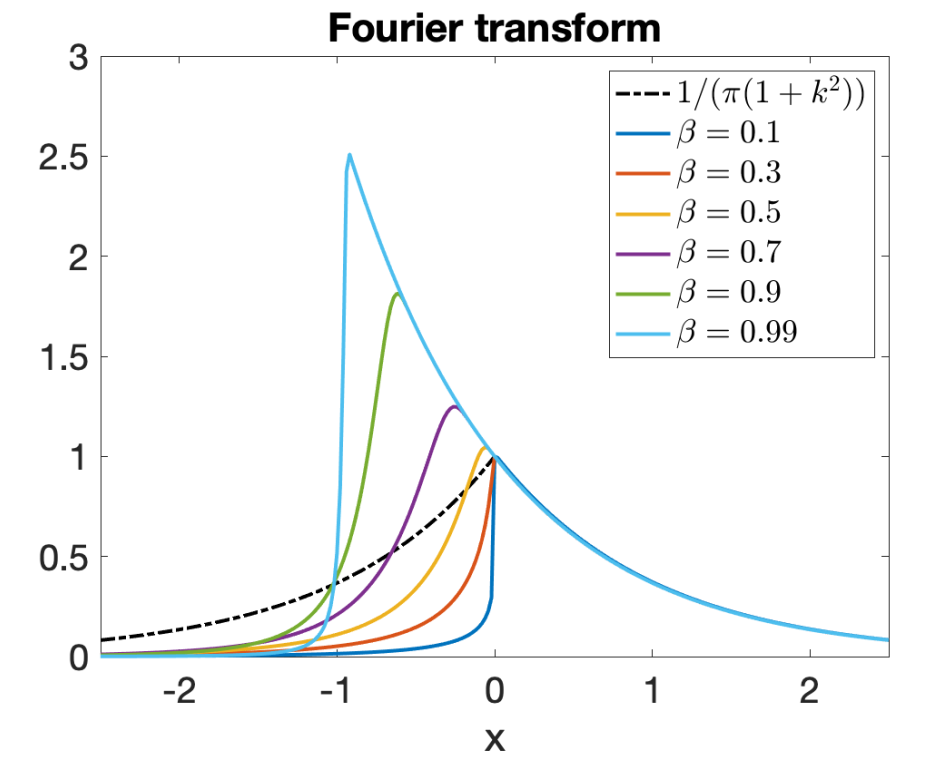}\\
  \caption{Fourier transforms of the kernel functions (see Figure 3 of \cite{ACL2023LCH2})} \label{fig:FourierSmooth}
\end{figure}

We emphasize that the fundamental principle remains consistent with our approach with smooth initialization. Indeed, as illustrated in Fig.~\ref{fig:FourierSmooth}, the Fourier transforms of their kernel functions are $\e^{-x}$ for $x > 0$, whereas they exhibit significant differences on the negative real axis. Therefore, the transformed kernel function can be interpreted as a smooth extension in the $p$ space.

\section{Establishing the optimal precision dependence}\label{sec:optimal scaling}

In this section, we demonstrate how to achieve optimal dependence on matrix queries by choosing suitable smooth initializations.

\subsection{Conditions for optimal precision dependence}\label{subsec:conditions}

According to the discussion in Section \ref{subsec:abstract}, if we require the original function $\psi(p)$ to satisfy
\begin{equation}\label{boundpsi}
\|\psi^{(r)}\|_{L^2((-L,R))}^{1/r} \lesssim \log(1/\epsilon) \qquad \text{when} \quad r \simeq \log(1/\epsilon),
\end{equation}
then substituting this bound into Theorem \ref{thm:complexity} implies that the non-unitary dynamic system \eqref{ODElinear} can be simulated from $t=0$ to $t=T$, within an error of $\varepsilon$, achieving $\widetilde{\mathcal{O}}( \alpha_H T \log(1/\epsilon) )$ queries to the HAM-$\text{H}_{\mu}$ oracle. This achieves optimal dependence on matrix queries.

However, it is impossible to achieve the bound in \eqref{boundpsi} while still requiring that $\psi(p) = \e^{-p}$ for $p \ge 0$. The reason is as follows. Let $\hat{\psi}(w)$ denote the inverse Fourier transform of $\psi(p)$, defined by
\[\hat{\psi}(w) = \frac{1}{\sqrt{2\pi}} \int_{-\infty}^{\infty} \psi(p) \e^{\i w p} \d p.\]
It follows from the Parseval's identity that
\begin{equation}\label{Parseva}
\|\psi^{(r)}\|_{L^2(\mathbb{R})} =  \|w^r\hat{\psi} \|_{L^2(\mathbb{R})}.
\end{equation}
\begin{itemize}
  \item If $\psi(p)$ is $C^\infty$ but not analytic, one can only expect super-polynomial decay of $\hat{\psi}$:
  \[|\hat{\psi}(w)| \le C \e^{-c |w|^{\beta}}, \qquad 0<\beta<1.\]
 For example, the mollifier $\eta(p)$ in \eqref{mollifier} decays in the Fourier domain asymptotically as $ \hat{\eta}(w) = (|w|^{-3/4}) \e^{-\sqrt{|w|}}$ \cite{Johnson15bump}.
  One can show that in this case,
  \[ \|w^r\hat{\psi} \|_{L^2(\mathbb{R})} \le C r^{r/\beta},\]
  which implies
  \[\|\psi^{(r)}\|_{L^2(\mathbb{R})}^{1/r} \le C  r^{1/\beta} = \mathcal{O}(\log^{1/\beta}(1/\epsilon))\]
  when $r \simeq \log(1/\epsilon)$.

  \item If $\psi(p)$ is analytic, according to the Paley-Wiener theorem, we have the exponential decay  of $\hat{\psi}$:
  \[|\hat{\psi}(w)| \le C \e^{-c |w|},\]
  which implies the desired bound
  \[\|\psi^{(r)}\|_{L^2(\mathbb{R})}^{1/r} =\mathcal{O}(r)= \mathcal{O}(\log(1/\epsilon))\]
  when $r \simeq \log(1/\epsilon)$.
\end{itemize}

As demonstrated in \cite{ACL2023LCH2}, the LCHS method cannot achieve optimal scaling with respect to the error tolerance $\varepsilon$. The underlying reason is that $\hat{\psi}(w)$ cannot exhibit exponential decay under the requirement that $\psi(p) = \e^{-p}$ for $p \ge 0$. The Schr\"odingerization and LCHS frameworks are closely related, as they share similar foundational mathematical principles. This connection further implies that the Schr\"odingerization method also attains only suboptimal scaling in $\epsilon$.

Based on the above discussion, to achieve optimal dependence on precision, we should relax the condition that $\psi(p) = \e^{-p}$ for $p \ge 0$ and instead seek an approximate version.
The required assumptions on $\psi(p)$ are as follows:
\begin{algorithm}[H]
\caption*{\textbf{Conditions on $\psi(p)$}}
\begin{itemize}
  \item[(\textbf{H1})] $\psi(p)$ exhibits exponential decay on $\mathbb{R}$ such that
   \[\psi(p) \lesssim \e^{-|p|} \leq 2\epsilon,\qquad \; p\in (-\infty, -L+\lambda_{\max}^-(H_1)T)\cup (R-\lambda_{\max}^+(H_1)T,+\infty),\]
  where $L$ and $R$ satisfy \eqref{eq: L,R,criterion}.
  \item[(\textbf{H2})] For $p \in [p_*,R]$, the condition $|\psi(p) - \e^{-p}| \le \epsilon$ holds, where $p_*\leq 1/2$,

  \item[(\textbf{H3})] There exist a constant $C$, independent of $r$ and $\epsilon$, such that
  \begin{equation}\label{psir}
  \|\psi^{(r)}\|_{L^2(\mathbb{R})}^{1/r} \le C  r, \qquad r \simeq \log(1/\epsilon).
  \end{equation}
\end{itemize}
\end{algorithm}

   The first assumption \textbf{(H1)} on $\psi$ is required to approximate the infinite domain problem by a periodic problem on the truncated interval $[-L,R]$, with specific error estimates provided in Theorem \ref{thm:truncation err}. The second assumption ensures the recovery of the target variable from the warped phase transformation in the Schr\"odinger-type formulation. It is noteworthy that if $\psi(p) = \e^{-p}$, the target variable can be recovered without error (see Theorem \ref{thm:recovery1}). If $\psi(p)$ satisfies an approximate condition \textbf{(H2)} , the corresponding error estimates are given in Theorem \ref{thm:recovery2}. The third assumption \textbf{(H3)} is introduced to derive the optimal estimate for $\mu_{\max}$ , with the main result presented in Theorem \ref{thm:complexity}.

\subsection{Construction of the function}

Motivated by the construction in the cut-off function method (cf. \eqref{cutoffmethod}), we define
\[\psi(p) = \phi(p) \e^{-p},\]
where the function $\phi(p)$ decays super-exponentially as  $p \to -\infty$; that is,
$\lim\limits_{p\to -\infty} \phi(p)\e^{-p} =0$, and satisfies $|\phi(p) - 1| \le \epsilon$ for $p \ge p_*$. Moreover, $\phi$ fulfills the following norm constraint
  \begin{equation}\label{phip}
  \|\phi^{(r)}\|_{L^2(\mathbb{R})}^{1/r} \le C  r, \qquad r\ge 1.
  \end{equation}

As analyzed in Section \ref{subsec:conditions}, the suboptimal cost of the cut-off method discussed in Section \ref{subsec:cut-off func} is primarily due to the lack of analyticity of the convolution kernel in \eqref{eq:mollifier1}. Specifically, the mollifier is not analytic, meaning it cannot be expressed as a convergent power series.
The motivation for using the mollifier as the convolution kernel is that it provides a smooth function with compact support, thus realizing the exact periodic boundary conditions.
However, if we relax this condition and seek a function that approximately satisfies the periodic boundary conditions, the most natural choice is the Gaussian $\e^{-p^2}$. In this case, a careful calculation reveals that
\begin{equation}\label{phip}
\phi(p) := (H(t) * \e^{-t^2} )(p) = \int_{\mathbb{R}} H(t)\e^{-(p-t)^2} \d t  = \frac{\text{erf}(p) + 1}{2},
\end{equation}
where $H(t)$ is the step function in \eqref{stepfunc}, and $\text{erf}(p)$ is the error function, which is defined as
\[\text{erf}(p) = \frac{2}{\sqrt{\pi}} \int_0^p \e^{-t^2} \d t.\]
We note that the error function is also used in \cite{Low25LCHS} to construct the optimal LCHS method, and our work is partially inspired by the approach presented in \cite{Low25LCHS}.
For later uses, we also introduce the complementary error function $\text{erfc}(p) = 1 - \text{erf}(p)$.

\begin{figure}[H]
  \centering
  \includegraphics[scale=0.5]{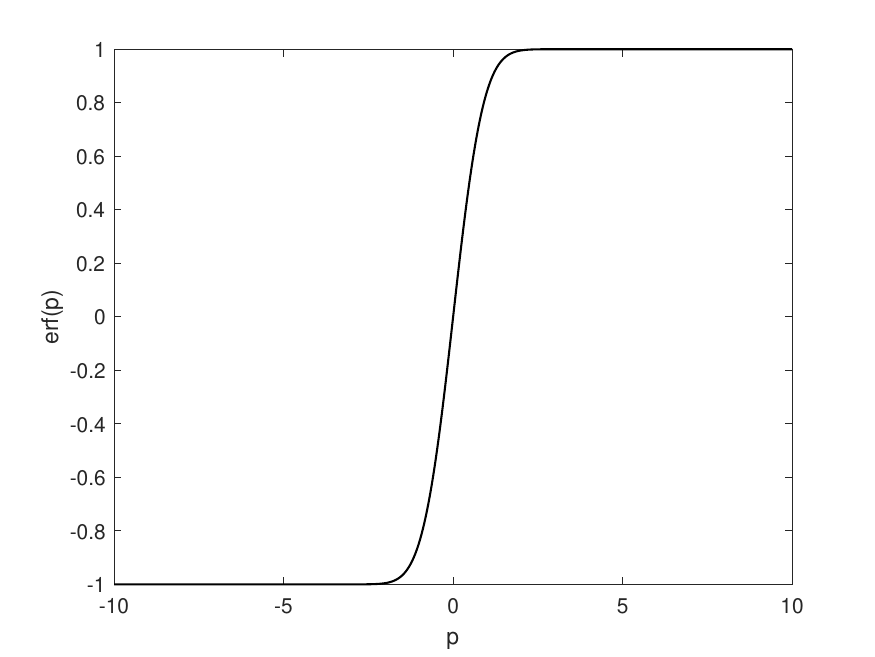}\\
  \caption{Error function $\text{erf}(p)$}\label{fig:erf}
\end{figure}

As shown in Fig.~\ref{fig:erf}, the error function is an odd function that approaches $\pm 1$ at a super-exponential rate (see Eq.~\eqref{erfbound}). This implies that $\phi(p)$ in \eqref{phip} tends to 1 at a super-exponential rate. However, this does not imply that $\psi(p) \approx 1$ for $p \ge p_*$, where $0 \le p_* \le \frac{1}{2}$. To address this, we rescale the error function and define
\begin{equation}\label{phip}
\phi(p) = \frac{\text{erf}(a p) + 1}{2},
\end{equation}
where $a \ge 1$ is a constant to be determined. Setting $x = ap$, we aim to find the lower bound of $x$ such that
\[
\Big| \frac{\text{erf}(x) + 1}{2} - 1 \Big| = \Big| -\frac{1}{2} \text{erfc}(x) \Big| \le \epsilon.
\]
For fixed $x>0$, we have the following expansion
\[\text{erfc}(x) = \frac{\e^{-x^2}}{x\sqrt{\pi}} \Big( 1 - \frac{1}{2x^2} + \frac{3}{4x^2} - \frac{15}{8x^6} + \cdots\Big),\]
which is an alternating series, leading to
\begin{equation}\label{erfbound}
\text{erfc}(x) \le \frac{\e^{-x^2}}{x\sqrt{\pi}} \le \e^{-x^2}, \qquad x\ge 1.
\end{equation}
This implies
\[ap = x \ge \log^{1/2} \frac{1}{\epsilon},\]
and thus, if we choose
\[a = 2 \log^{1/2} \frac{1}{\epsilon},\]
we obtain $p_* = 1/2$. Unless otherwise specified, we set $p_*=1/2$ throughout the following analysis.

\begin{remark}
Since the error function is odd and asymptotically approaches $ \pm 1 $, it follows that $ \phi(p) $ decays to zero on the negative axis at the same rate as shown in \eqref{erfbound}. This implies that $ \psi(p) $ decays super-exponentially as $p \to -\infty $ and exponentially as $p \to +\infty$.
\end{remark}

\begin{theorem} \label{thm:psibound}
Let $\phi(p)$ be defined in \eqref{phip} with $a = 2\log^{1/2} (1/\epsilon)$. Then the function $\psi(p) = \phi(p)\e^{-p}$ satisfies the required conditions \textbf{(H1)--(H3)}.
\end{theorem}

\begin{proof}
We only need to verify the inequality \eqref{psir}.  Let $\xi^{(k)}(p) = \phi^{(k)}(p) \e^{-p}$. Following the same calculation in \eqref{psirbound}, one gets
\begin{align*}
|\psi^{(r)}(p)|
 \le  2^r ( |\xi^{(0)}|^2 + \cdots + |\xi^{(r)}|^2 )^{1/2}.
\end{align*}
 This implies
\[ |\psi^{(r)}(p)|^{1/r} \lesssim \max_{0\leq k\leq r} |\xi^{(k)}|^{1/r} = \max_{0\le k\leq r} |\phi^{(k)}(p) \e^{-p}|^{1/r}
\lesssim  a \max_{0\leq k\leq r} |\text{erf}^{(k)}(ap) \e^{-p}|^{1/r},\]
since $\phi^{(k)}(p) = \frac12 a^k \text{erf}^{(k)}(ap)$.

A direct calculation gives
\[\text{erf}^{(k)}(ap) = \frac{2}{\sqrt{\pi}} (-1)^{k-1} H_{k-1}(ap) \e^{-a^2p^2}, \qquad 1\leq k\leq r,\]
where $H_k$ is the Hermitian polynomial,  defined by $H_k(p) = (-1)^k \e^{p^2} (\e^{-p^2})^{(k)}$, which leads to
\[|\text{erf}^{(k)}(ap) \e^{-p}|  \lesssim | H_{k-1}(ap) \e^{-a^2p^2-p}|.\]
It is known that
\[|H_k(ap)| \le C 2^{k/2} \sqrt{k!} \e^{a^2p^2
/2},\]
where $C \approx 1.086435$ (see \cite[Eq.~(7.66)]{Shenspectral} for example). Therefore, we obtain
\[\|\text{erf}^{(k)}(ap) \e^{-p}\|_{L^2(\mathbb{R})}^{1/r} \lesssim 2^{\frac{k}{2r}}(k!)^{1/(2r)}\|\e^{-a^2p^2/2-p}\|_{L^2(\mathbb{R})}  \le r^{1/2}, \quad k\leq  r,\]
which gives
\[\|\psi^{(r)}\|_{L^2(\mathbb{R})}^{1/r} \lesssim a r^{1/2}.\]
The proof is completed by noting that $a \simeq r^{1/2} \simeq \log^{1/2}(1/\epsilon)$.
\end{proof}

\begin{remark}
 Among the three conditions, the most challenging to satisfy is $\textbf{(H3)}$. In fact, functions satisfying $\textbf{(H3)}$ belong to the Gevrey class of order 1. Recall that a function $f \in C^\infty(\mathbb{R})$ is said to be in the Gevrey class $G^s(\mathbb{R})$ for $s \geq 1$ if, for every compact set $K \subset \mathbb{R}$, there exist constants $C, R > 0$ (independent of $r$) such that
\[\sup_{p\in \mathbb{R}}|f^{(r)}(p)| \leq C R^r(r!)^s\qquad \forall r\in \mathbb{N}.\]
    In particular, the case $s = 1$ corresponds to the Gevrey-1 class, which corresponds to the condition $\textbf{(H3)}$.

This definition implies that the construction of $\psi$ is not unique. For example, define
\[\chi(p) = \big(\int_{-\infty}^{\infty} \e^{-t^4}\d t\big)^{-1} \int_0^p \e^{-t^4}\d t,\quad
    \text{and} \quad
    \tilde{\phi}(p) = \chi(ap)+\frac{1}{2},\]
    where $a$ is an adjustable parameter chosen so that $\textbf{(H2)}$ is satisfied. Following the same line of the proof, we obtain a new initial function $\psi(p) = \tilde{\phi}(p)\e^{-p}$.
\end{remark}

\begin{remark}\label{remark}
   When there is no inhomogeneous term, the introduction of auxiliary constant variables is
   unnecessary. In this case, the total query complexity for $H_1$ and $H_2$ is
   $\tilde{\mathcal{O}}\big( \frac{\|\bm{u}(0)\|}{\|\bm{u}(T)\|} \alpha_H T \mu_{\max}\big)$.
   When the system is time-dependent, we employ the approach described in Section \ref{sub:time-dependent} to obtain a time-independent Hamiltonian system by introducing an additional dimension. Following the analysis in Theorem \ref{thm:complexity}, it is straightforward to derive that the query complexity for the time-dependent system is
    $	\tilde{\mathcal{O}}\big( \frac{\|\bm{u}(0)\|}{\|\bm{u}(T)\|} \alpha_H T \mu_{\max}\mu_{\max}^s\big)$, where $\mu_{\max}^s = \max_{l\in [N_s]}|\mu_l^s|$. After applying the same smoothing technique, the query complexity approaches $\widetilde{\mathcal{O}}(\frac{\|\bm{u}(0)\|}{\|\bm{u}(T)\|} \alpha_H T (\log \frac{1}{\varepsilon})^2)$, which includes an additional logarithmic term with respect to $\varepsilon$, consistent with the observation in \cite{ACL2023LCH2} (see Table 1 there).
\end{remark}

\begin{remark}
Recent results in \cite{ALWZ25} establish quantum query lower bounds for solving ODEs by reduction to quantum linear system solvers.
These bounds show that any algorithm requires at least $\Omega(T^\alpha)$ queries for sufficiently long evolution time $T$ and at least $\Omega(\log^\alpha(1/\varepsilon))$ queries for target precision $\varepsilon$ in the worst case, where $\alpha$ is the query lower bound exponent for quantum linear system solvers. According to \cite{Costa2021QLSA}, one has $\alpha = 1$. This implies that our Schr\"odingerization algorithm achieves optimal dependence on matrix queries.
\end{remark}

\section{Error estimate for the Schr\"odingerization method} \label{sec:err}


In this section, we analyze the error of the Schr\"odingerization method with smooth initial functions. The discretization error consists of two parts: recovering the target variable
$\bm{u}_{f}$, the solution of \eqref{eq: homo Au}, from
$\bm{w}$, the solution of \eqref{u2v}; and the numerical error from applying the Fourier spectral method to \eqref{u2v}. For simplicity, we assume that both $A$ and the source term
$\bm{b}$ are time-independent; the time-dependent case can be treated analogously.


\subsection{Recovery from \eqref{u2v} with smooth initializations}


In the following, we provide a rigorous basis for the recovery of $\bm{u}_f(t)$ from $\bm{w}(t,p)$ under the stated conditions.

\begin{lemma}\label{lem:contr_by_init}
	Let $\bm{\omega}:[0,T]\times\mathbb R \to \mathbb C^N$ be a solution to
	\[
	\frac{\d \bm{\omega}}{\d t} = -H_1 \partial_p \bm{\omega} + \i H_2 \bm{\omega},\quad
	\bm{\omega}(0,p) = \begin{cases}
		\textbf{0},   \quad p\in [p_*,R] \\
		\bm{\omega}_0(p),  \quad p\in \bbR \backslash [p_*,R],
	\end{cases}
	\]
where $\bm{\omega}_0(p)$ is a given function. Then we have
\[\bm{\omega}(T,p) = \bm{0}, \qquad  p\in (p_*+\lambda_{\max}^+(H_1)T,~~R-\lambda_{\max}^-(H_1)T).\]
\end{lemma}
\begin{proof}	
We first derive a local conservation law for the squared amplitude.
Since $H_1,H_2$ are Hermitian, from the evolution equation $\partial_t \bm{\omega} = -H_1 \partial_p \bm{\omega} + \i H_2 \bm{\omega}$, we obtain
\begin{equation}\label{eq:cons}
\partial_t \big(\bm{\omega}^\dagger \bm{\omega}\big)
= \bm{\omega}^\dagger \partial_t \bm{\omega} + (\partial_t \bm{\omega})^\dagger \bm{\omega}
=  -\partial_p \big( \bm{\omega}^\dagger H_1 \bm{\omega}\big).
\end{equation}
%
For $t\in[0,T]$, define the moving interval
\[
I(t) = \big(p_*+\lambda_{\max}^+(H_1) t,\;\; R-\lambda_{\max}^-(H_1) t\big),
\]
and the localized energy
\[
E_{\omega}(t) := \int_{I(t)} \|\bm{\omega}(t,p)\|^2 \,\mathrm{d}p.
\]
By the Leibniz's rule and \eqref{eq:cons}, one finds
\begin{align*}
\frac{\mathrm{d}}{\mathrm{d}t}E_{\omega}(t)
= & -\Big[\,\bm{\omega}^\dagger H_1 \bm{\omega}\,\Big]_{p=p_*+\lambda_{\max}^+(H_1) t}^{R-\lambda_{\max}^-(H_1) t}+ \lambda_{\max}^-(H_1) \|\bm{\omega}(t,R-\lambda_{\max}^-(H_1) t)\|^2 \\
&\quad - \lambda_{\max}^+(H_1) \|\bm{\omega}(t,p_*+\lambda_{\max}^+(H_1) t)\|^2\leq 0.
\end{align*}
At $t=0$, the initial condition ensures $\bm{\omega}(0,p)\equiv \bm{0}$ for $p\in(p_*,R)$, hence $E(0)=0$.
By monotonicity, we conclude $E_{\omega}(t)\equiv 0$ for all $t\in[0,T]$.
Therefore, $\bm{\omega}(t,p)\equiv 0$ almost everywhere in $I(t)$, and by continuity, it follows that
$\bm{\omega}(T,p)\equiv \bm{0}$ for $ p \in \big(p_*+\lambda_{\max}^+(H_1) T,\;\; R-\lambda_{\max}^-(H_1) T\big)$.
\end{proof}

\begin{remark}\label{rmrk}
	Following the proof, it is clear that if $\bm{\omega}(0,p) = \epsilon \bm{u}_{I}$ for $p\in [p_*,R]$, then
	\[
	 \int_{p_*+\lambda_{\max}^+(H_1)T}^{R-\lambda_{\max}^-(H_1)T}\|\bm{\omega}(T,p)\|^2 \d p
	\leq  \int_{p_*}^{R}\|\bm{\omega}(0,p)\|^2\d p =  \epsilon^2 \|\bm{u}_{I}\|^2 (R-p_*).
	\]
	
\end{remark}

\begin{theorem}\label{thm:recovery1}
	Assume $\psi(p) \in L^2(\bbR)$ with $\psi(p)=\e^{-p}$ in $(0,R)$, and $R>(\lambda_{\max}^-(H_1) + \lambda_{\max}^+(H_1))T$. Then the solution of \eqref{ODElinear} can be recovered by
	\[\bm{u}_f(T) = \e^{p}\bm{w}(T,p),\quad  p\in (\lambda_{\max}^+(H_1)T, ~~R-\lambda_{\max}^{-}(H_1)T) ,\]
	where $\bm{w}(T,p)$ is the solution to \eqref{u2v}.
\end{theorem}
\begin{proof}
The function $\widetilde{\bm{w}}(t,p)=\e^{-p}\e^{(H_1+\i H_2)t}\bm{u}_{0}$ satisfies both the PDE and the initial condition whenever the backward characteristics from $(t,p)$ remain inside $(0,R)$, i.e., $p-\lambda_j t \in (0, R)$, where $\lambda_j$ is the eigenvalue of $H_1$.
This requires $\lambda_{\max}^{+}(H_1)T<p<R-\lambda_{\max}^-(H_1)T$.
	Define the error vector $\bm{e}_w = \bm{w} - \widetilde{\bm{w}}$. Then $\bm{e}_w(t,p)$ satisfies the same PDE, and $\bm{e}_w(0,p) \equiv \bm{0}$ for all $p \in (0, R)$. The proof is completed by applying Lemma~\ref{lem:contr_by_init}.
\end{proof}
\begin{theorem}\label{thm:recovery2}
Assume that $\psi(p) \in L^2(\bbR)$  satisfies condition \textbf{(H2)}.
The recovery from $\bm{w}(T,p)$,  which is the solution to \eqref{u2v},  is defined by
	\[\bm{u}_f^*(T,p) = \e^p \bm{w}(T,p) \qquad  p \in (p_*+\lambda_{\max}^+(H_1)T,~~R-\lambda_{\max}^-(H_1)T).\]
Assume $p_*=\frac{1}{2}$ and $R = \lambda_{\max}^-(H_1)T+\mathcal{O}(1)$. The $L^2$ error estimate between the recovery and the solution to  \eqref{ODElinear} is given by
	\[
	\frac{1}{L_*} \int_{p_*+\lambda_{\max}^+(H_1)T}^{R-\lambda_{\max}^-(H_1)T}\|\bm{u}_f^*(T,p)-\bm{u}_f(T)\|^2 \d p \lesssim  \epsilon^2 \|\bm{u}_{I}\|^2,
	\]
where $L_* = R-p_*-(\lambda_{\max}^+(H_1)T+\lambda_{\max}^-(H_1)T)$ is the length of the recovery domain.
\end{theorem}
\begin{proof}
     Since the matrix $A$ and the vector $\bm{b}$ in system \eqref{ODElinear} are time-independent and satisfy $\lambda(A + A^\dagger) \leq 0$, it follows that $\lambda_{\max}^+(H_1)T \leq 1/2$.
     The proof is completed by applying Theorem~\ref{thm:recovery1} and Remark~\ref{rmrk}.
\end{proof}

\subsection{Error estimate of spectral discretization}
In this subsection, we provide the detailed proof of Theorem \ref{thm:err w-wh}.
The error associated with \eqref{errw} arises from two sources: (i) the truncation error introduced by restricting the equation for $\bm{w}$ from the whole space to a finite domain with periodic boundary conditions, and (ii) the discretization error of the Fourier spectral method applied to the periodic problem on the finite domain.
\subsubsection{Error estimate of the truncation}
The vector $\bb{w}(\cdot, p)$ for Eq.~\eqref{u2v} is defined in $\mathbb{R}$. However, in the implementation, $\mathbb{R}$ is truncated to the interval $[-L, R]$, with $L, R$ satisfying \eqref{eq: L,R,criterion},  resulting in the solution $\bm{w}_h(\cdot, p)$ in \eqref{heatww}. In the following, we will characterize the truncation error associated with this approximation.

It is apparent that the discretization \eqref{heatww} serves as an approximation for the following system with periodic boundary conditions:
\begin{equation}\label{eq:periodic w}
	\begin{cases}
		\frac{\partial}{\partial t} \mathcal{W}= -H_1\partial_p \mathcal{W} + \i H_2 \mathcal{W}, \quad  0< t < T,~~-L< p < R,\\
		\mathcal{W}(t,-L) = \mathcal{W}(t,R),\\
		\mathcal{W}(0,p) = \psi(p)\bm{u}_{I},
	\end{cases}
\end{equation}
where $\psi(p)$ satisfies $\textbf{(H1)}-\textbf{(H2)}$, and $\psi \in H^r((-L,R))$.
Noting that $\psi^{(k)}(p)\approx 0$ at $p =-L,R$ for $k\le r$, it implies that each entry of $\mathcal{W}(0,p)$ can be treated as a function in $H_p^k[-L,R]$ for any $0\le k\le r$, which consists of functions with derivatives of order up to $(k-1)$ being periodic on $[-L,R]$.
It is important to note that this estimate applies not only to the initial data but also to the solution of \eqref{eq:periodic w}, since $\mathcal{W}$ satisfies a transport equation in the $p$ direction thus preserves the regularity in $p$ in the initial data as time evolves.

We observe that to estimate the error between $\bm{w}_h$ and $\bm{w}$, it suffices to bound the error between $\mathcal{W}$ and $\bm{w}$.

\begin{lemma}
	\label{lem:boundary-control}
	Let $\bm{w}:[0,T]\times\mathbb R \to \mathbb C^N$ be the solution to
	\eqref{u2v} and $\psi(p)$ satisfies $\textbf{(H1)}$.
	Then  it holds
	\begin{align}
		\left|\int_0^T \bm{w}^{\dagger}(t,-L) H_1 \bm{w}(t,-L)\d t \right|&\lesssim \,  \int_{-\infty}^{-L+\lambda_{\max}^-(H_1)T}
		\|\bm{w}(0,p)\|^2\d p,  \label{eq:W Left}\\
		\left| \int_0^T\bm{w}^{\dagger}(t,R) H_1 \bm{w}(t,R)  \d t \right| &\lesssim\, \int_{R-\lambda_{\max}^+(H_1)T}^{+\infty}
		\|\bm{w}(0,p)\|^2\d p  . \label{eq:W Right}
	\end{align}
\end{lemma}

\begin{proof}
	Define the local energy density $E(t,p)=\|\bm{w}(t,p)\|^2=\bm{w}(t,p)^\dagger \bm{w}(t,p)$.
	Multiplying \eqref{u2v} by $\bm{w}^\dagger$ from the left
	and using the Hermiticity of $H_2$, we obtain the conservation law
	\begin{equation}\label{eq:energy-cons}
		\partial_t E(t,p) + \partial_p \big(\bm{w}^\dagger H_1 \bm{w}\big) = 0.
	\end{equation}
	Let $\{\chi_n\}_{n\ge1}\subset C_c^\infty(\mathbb R)$ be a sequence of smooth cutoff functions such that
	$\chi_n(x)=1$ for $x\le 0$, $\chi_n(x)=0$ for $x\ge 1/n$, and $\chi_n'(x)\le0$.
	For fixed $t\in[0,T]$, define the moving cutoff
	\[
	\Phi_{n,s}(p):=\chi_n\big(p+L-\lambda_{\max}^-(H_1)(t-s)\big), \qquad s\in[0,t],
	\]
	and the localized energy
	\[
	I_n(s):=\int_{\mathbb R} \Phi_{n,s}(p)\, E(s,p)\, \d p.
	\]
	
	Differentiating $I_n(s)$ with respect to $s$, and using \eqref{eq:energy-cons}
	together with integration by parts, one obtains
	\[
	I_n'(s) = \int_{\mathbb R} \chi_n'\left(p+L-\lambda_{\max}^-(H_1)(t-s)\right) \, \big(\lambda_{\max}^-(H_1)E(s,p) +\bm{w}^\dagger H_1 \bm{w}(s,p)\big)\, \d p.
	\]
	The boundary terms from integration by parts vanish--after ignoring the error of $O(\epsilon)$-- due to the exponential decay of $\bm{w}$.
	Since $\chi_n'\le 0$ and $-\lambda_{\max}^-(H_1)E \leq \bm{w}^\dagger H_1 \bm{w}\leq \lambda_{\max}^+(H_1) E$,
	the integrand is positive, hence $I_n'(s)\leq 0$ for all $s\in[0,t]$.
	Therefore $I_n(t)\le I_n(0)$.
	
	By the construction,
	\[
	I_n(t)=\int_{\mathbb R} \chi_n(p+L)\,\|\bm{w}(t,p)\|^2 \d p,
	\qquad
	I_n(0)=\int_{\mathbb R} \chi_n(p+L-\lambda_{\max}^-(H_1)t))\,\|\bm{w}(0,p)\|^2 \d p.
	\]
Taking $n\to\infty$ , we obtain
	\[
	\int_{-\infty}^{-L}\|\bm{w}(t,p)\|^2 \d p \;\leq\; \int_{-\infty}^{-L+\lambda_{\max}^-(H_1)T}\|\bm{w}(0,p)\|^2 \d p.
	\]
	Integrating \eqref{eq:energy-cons} in the time-space domain $ (0,T)\times (-\infty,-L)$ gives
	\begin{align}
		&\int_{-\infty}^{-L} \|\bm{w}(T,p)\|^2 \d p
        -\int_{-\infty}^{-L} \|\bm{w}(0,p)\|^2\d p  \notag \\
        =& -\int_0^T \int_{-\infty}^{-L} \partial_p\big( \bm{w}^{\dagger} H_1 \bm{w}\big) \d p \d t= -\int_0^T  \bm{w}^{\dagger}(t,-L) H_1 \bm{w}(t,-L) \d t
	\end{align}
Thus, we have
	\begin{equation}
		\left|\int_0^T  \bm{w}^{\dagger}(t,-L) H_1 \bm{w}(t,-L) \d t\right| \leq 2 \int_{-\infty}^{-L+\lambda_{\max}^-(H_1)T} \|\bm{w}(0,p)\|\d p.
    \end{equation}
	The proof for \eqref{eq:W Right} is similar, which is omitted here.
\end{proof}

\begin{lemma}\label{lem:boundary-trace}
$\bm{w}:[0,T]\times\mathbb R \to \mathbb C^N$ be the solution to
	\eqref{u2v} and $\psi(p)$ satisfies $\textbf{(H1)}$.
	Then  it holds that
\begin{equation}\label{eq:wLR}
\int_0^T \|\bb{w}(t,-L)\|^2\,\mathrm{d}t
+\int_0^T \|\bb{w}(t,R)\|^2\,\mathrm{d}t
\;\lesssim\; T\,\epsilon^{2}\,\|\bb{u}_I\|^{2},
\end{equation}
where the hidden constant is independent of $H_1$ and $\bb{u}_I$.
\end{lemma}

\begin{proof}
Let $U(t)$ be the solution of the matrix ODE
\[
\partial_t U(t) = \mathrm{i}H_2(t)U(t),\qquad U(0)=I.
\]
Since $H_2(t)$ is Hermitian, $U(t)$ is unitary for all $t\in[0,T]$.
Define
\[
\bb{v}(t,p):=U(t)^{-1}\bb{w}(t,p).
\]
Then $\|\bb{v}(t,p)\|=\|\bb{w}(t,p)\|$ and $\bb{v}$ solves
\[
\partial_t \bb{v}(t,p) = -\widetilde H_1\,\partial_p \bb{v}(t,p),
\qquad
\bb{v}(0,p)=\psi(p)\,\bb{u}_I,
\]
where $\widetilde H_1 := U(t)^{-1}H_1U(t)$ is Hermitian and has the same eigenvalues as $H_1$.
Since $H_1$ is time-independent, the spectrum of $\widetilde H_1$ is independent of $t$.

Diagonalizing $\widetilde H_1=Q\Lambda Q^*$ with a unitary $Q$ and 
$\Lambda=\mathrm{diag}(\lambda_1,\dots,\lambda_N)$, we set
\[
\bb{z}(t,p):=Q^*\bb{v}(t,p),\qquad \tilde{\bb{u}}_I:=Q^*\bb{u}_I.
\]
Then each component $z_j$ satisfies a scalar transport equation
\[
\partial_t z_j(t,p) = -\lambda_j\,\partial_p z_j(t,p),\qquad
z_j(0,p)=\psi(p)\,\tilde u_{I,j},
\]
whose solution along characteristics is
$
z_j(t,p) = \psi(p+\lambda_j t)\,\tilde u_{I,j}.
$

Evaluating at the boundaries $p=-L$ and $p=R$ gives
\[
z_j(t,-L)=\psi(-L+\lambda_j t)\,\tilde u_{I,j},\quad
z_j(t,R)=\psi(R+\lambda_j t)\,\tilde u_{I,j}.
\]
By the choice of $L,R$ and the definition of $\lambda_{\max}^{\pm}(H_1)$,
for every $j$ and $t\in[0,T]$ the characteristic footpoints satisfy
\[
-L+\lambda_j t\in(-\infty,-L+\lambda_{\max}^-(H_1)T],\quad
R+\lambda_j t\in[R-\lambda_{\max}^+(H_1)T,+\infty).
\]
Hence the decay condition \textbf{(H1)} yields
\[
|\psi(-L+\lambda_j t)|\le 2\epsilon,\qquad
|\psi(R+\lambda_j t)|\le 2\epsilon,
\quad 0\le t\le T,\ \forall j.
\]
Therefore
\[
|z_j(t,-L)|^2\le 4\epsilon^2|\tilde u_{I,j}|^2,\qquad
|z_j(t,R)|^2\le 4\epsilon^2|\tilde u_{I,j}|^2.
\]

Summing over $j$ and using the unitarity of $Q$ and $U(t)$, we obtain
\[
\|\bb{w}(t,-L)\|^2
= \|\bb{z}(t,-L)\|^2
= \sum_{j=1}^N |z_j(t,-L)|^2
\le 4\epsilon^2\sum_{j=1}^N |\tilde u_{I,j}|^2
= 4\epsilon^2\|\bb{u}_I\|^2,
\]
and similarly
$
\|\bb{w}(t,R)\|^2\le 4\epsilon^2\|\bb{u}_I\|^2 $ for $0\le t\le T.
$
Integrating in time yields \eqref{eq:wLR}.
\end{proof}

Similarly, the estimate for the periodic case follows an argument analogous to the previous proof; we therefore state it without proof in the following lemma.
\begin{lemma}\label{lem:boundary control periodic}
    Assume $\mathcal{W}$ is the solution to \eqref{eq:periodic w} with periodic boundary conditions. Then it holds
    \begin{equation}
        \left|\int_0^T \mathcal{W}^{\dagger}(t,-L) H_1 \mathcal{W}(t,-L)\d t \right|\lesssim \,  \int_{-L}^{-L+\lambda_{\max}^-(H_1)T}
		\|\mathcal{W}(0,p)\|^2\d p + \int_{R-\lambda^+_{\max}(H_1)T}^{R}
		\|\mathcal{W}(0,p)\|^2\d p.
    \end{equation}
\end{lemma}
\begin{lemma}\label{lem:boundary control periodic}
    Assume $\mathcal{W}$ is the solution to \eqref{eq:periodic w} with periodic boundary conditions. Then it holds
\begin{equation}\label{eq:peri wLR}
\int_0^T \|\mathcal{W}(t,-L)\|^2\,\mathrm{d}t
+\int_0^T \|\mathcal{W}(t,R)\|^2\,\mathrm{d}t
\;\lesssim\; T\,\epsilon^{2}\,\|\bb{u}_I\|^{2}.
\end{equation}
\end{lemma}
Then, we get the error between $\bm{w}(t,p)$ and $\mathcal{W}(t,p)$.
\begin{theorem}\label{thm:truncation err}
	Let $\mathcal{E}(t,p) = \bm{w}(t,p) - \mathcal{W}(t,p)$. Assume $\psi$ satisfies \textbf{(H1)}.
	It follows that
	\begin{equation*}
		\|\bm{w}(T,p)-\mathcal{W}(T,p)\|_{L^2((-L,R))} \lesssim \epsilon \|\bm{u}_{I}\|.
	\end{equation*}
\end{theorem}

\begin{proof}
	The error function $\mathcal{E}(t,p) $ satisfies
	\begin{equation}\label{eq:err E}
		\frac{\d}{\d t} \mathcal{E} = -H_1\partial_p \mathcal{E} + \i H_2 \mathcal{E},\quad
		\mathcal{E}(0,p) = \bm{0}.  
	\end{equation}
As in \eqref{eq:cons}, by testing \eqref{eq:err E} against $\mathcal{E}^{\dagger}$ and integrating by parts, we arrive at
	\begin{equation*}
		\begin{aligned}
			\|\mathcal{E}(T,p)\|_{L^2((-L,R))}^2  \lesssim &
			\left|\int_0^T \bm{w}^{\dagger}(t,- L) H_1 \bm{w}(t,-L)\d t\right|
			+\left|\int_0^T \bm{w}^{\dagger}(t,R) H_1 \bm{w}(t,R)\d t\right|\\
            &+\left|\int_0^T \mathcal{W}^{\dagger}(t,L) H_1 \mathcal{W}(t,L)\d t\right|  
            + \frac{1}{2T} \int_0^T \|\bm{w}(t,-L)\|^2\,\d t \notag \\
            &+\frac{1}{2T} \int_0^T \|\bm{w}(t,R)\|^2\,\d t
            + \frac{1}{2T} \int_0^T \|\mathcal{W}(t,-L)\|^2\,\d t 
            +\frac{1}{2T} \int_0^T \|\mathcal{W}(t,R)\|^2\,\d t.
		\end{aligned}
	\end{equation*}
	which completes the proof by applying Lemma \ref{lem:boundary-control} -- \ref{lem:boundary control periodic}.
\end{proof}

\subsubsection{Main proof of Theorem \ref{thm:err w-wh}}

For brevity, we set $a = -L$ and $b = R$.
Define the complex $N_p$-dimensional space
\[X_{N_p}:= \text{span}\big\{\phi_l(p) = \e^{\i\mu_l(p-a)}:0\leq l<N_p\big\}, \]
where $\mu_l = \frac{2\pi}{b-a}(l-\frac{N_p}{2})$.
Let $\bb{w}(t,p) = [w_1(t,p), \cdots, w_{2N}(t,p)]^\top$ be the solution to \eqref{u2v}.
 The approximate solution $\bm{w}_h\in (X_{N_p})^{2N}$ is then given  in \eqref{interpcoeff}.

Define the $L^2$-orthogonal projection $P_h:L^2((a,b))\to X_{N_p}$ , given by
\[P_h u =\sum_{k=0}^{N_p-1}\hat{u}_l \phi_l(p),\qquad
\hat{u}_l = \frac{1}{b-a}\int_{a}^b u\e^{-\i\mu_l(p-a)}\d  p.
\]
Next, we consider the Fourier interpolation denoted by $\Pi$, that is
\[\Pi u (p) = \sum_{l=0}^{N_p} \tilde{u}_l \phi_l(p),\quad
\tilde{u}_l = \frac{1}{N_p} \sum_{k=0}^{N_p} u(p_k)\e^{-\i\mu_k(p_l-a)},\quad 0\leq l<N_p.\]
Then, one has the following estimates for $u\in H^r_p((a,b))$ \cite{Shenspectral}
\begin{equation}\label{eq:err inter}
    \|P_h u - \Pi u\|_{L^2((a,b))} \lesssim (\triangle p)^r \|u^{(r)}\|_{L^2((a,b))}.
\end{equation}

Using the triangle inequality, one has
\begin{equation}\label{eq:err w-wh}
\|\bb{w}(\cdot,p) - \bb{w}_h(\cdot,p)\|_{L^2((a,b))} \leq \|\bb{w}(\cdot,p) - \mathcal{W}(\cdot,p)\|_{L^2((a,b))}+\|\mathcal{W}(\cdot,p) - \bb{w}_h(\cdot,p)\|_{L^2((a,b))}.
\end{equation}
According to Theorem \ref{thm:truncation err}, it is sufficient to prove the second term.
By the triangle inequality, the second part of \eqref{eq:err w-wh} can be split as
\[\| \mathcal{W}(\cdot,p) - \bb{w}_h(\cdot,p)\|_{L^2((a,b))}
	\le \|\mathcal{W}(\cdot,p) - P_h\mathcal{W}(\cdot,p)\|_{L^2((a,b))} + \|P_h \mathcal{W}(\cdot,p) - \bb{w}_h(\cdot,p)\|_{L^2((a,b))}.\]
For the first term on the right-hand side, the standard projection error estimate for the discrete Fourier transform yields
\[\|\mathcal{W}-P_h \mathcal{W}(\cdot,p)\|_{L^2((a,b))} \lesssim \Big(\frac{b-a}{N_p}\Big)^k \|\mathcal{W}^{(k)}(\cdot,p)\|_{L^2((a,b))}
\lesssim \Big(\frac{b-a}{N_p}\Big)^k \|\psi^{(k)}\|_{L^2((a,b))} \|\bb{u}_{I}\|.\]
The second inequality follows by expanding $\mathcal W$ and $\psi$ in the periodic Fourier basis and observing that, for each Fourier mode, the evolution operator $e^{\,\mathrm{i}(-\xi H_1 + H_2)t}$ is unitary, so that $\|\partial_p^k \mathcal W(t,\cdot)\|_{L^2((a,b))}$ is controlled by $\|\psi^{(k)}\|_{L^2((a,b))}\,\|\mathbf{u}_I\|$ with a constant independent of $t$.
For the second term, by definition, one gets
\[P_h \mathcal{W}(\cdot,p) - \bb{w}_h(\cdot,p) = \sum\limits_{l= 0}^{N_p-1} (\hat{\bb{w}}_l(t)- \tilde{\bb{w}}_{l,h}(t)) \phi_l(p), \]
where $\hat{\bm{w}}_l = \int_{a}^b \bm{w}e^{-i\mu_l(p-a)}\d p/(b-a)$, which gives
\[ \|P_h \mathcal{W}(\cdot,p) - \bb{w}_h(\cdot,p)\|_{L^2((a,b))}^2 = \sum\limits_{l= 0}^{N_p-1} \|\hat{\bb{w}}_l(t)- \tilde{\bb{w}}_{l,h}(t)\|^2. \]
Let $\hat{\bm{e}}_l = \hat{\bm{w}}_l - \tilde{\bm{w}}_{l,h}$. It is easy to check that
\[\begin{cases}
\dfrac{\d }{\d t} \hat{\bb{e}}_l(t) = -\i (\mu_l H_1 -H_2)  \hat{\bb{e}}_l(t)\\
\hat{\bb{e}}_l(0) = (\hat{\psi}_l - \tilde{\psi}_l) \bb{u}_{I}
\end{cases} \qquad l = 0,\cdots, N_p-1.\]
Therefore, we have
\begin{align}
\|P_h \mathcal{W}(\cdot,p) - \bb{w}_h(\cdot,p)\|_{L^2((a,b))}^2
&=\sum_{l=0}^{N_p-1} \|\hat{e}_l\|^2 = \sum_{l=0}^{N_p-1} |\hat{\psi}_l-\tilde{\psi}_l|^2\|\bm{u}_{0}\|^2   \notag  \\
&=\|P_h\psi - \Pi \psi\|^2_{L^2((a,b))}\|\bm{u}_{I}\|^2.
\end{align}
The proof is finished by using \eqref{eq:err inter} and \eqref{mumax}.

\section{Discussion}
The LCHS method proposed in \cite{ALL2023LCH} is closely related to the Schr\"odingerization framework.
For example, by taking $\psi(p)=\e^{-|p|}$ in \eqref{u2v}, one applies the continuous Fourier transform on $p$ to get the Schr\"odinger-type system
\[
\frac{\d}{\d t} \hat{\bm{w}} = \i(\xi H_1+H_2)\hat{\bm{w}},
\qquad \hat{\bm{w}}(0)= \frac{1}{\pi (1+\xi^2)}\bm{u}_{I}.
\]
This, together with the recovery formula, yields
\[
\bm{u} = \bm{w}(t,0) = \int_{\mathbb{R}} \frac{1}{\pi (1+\xi^2)}
\mathcal{T}\exp\!\Big(\i\int_0^t \xi H_1(s)+H_2(s)\,\d s\Big)\bm{u}_{I} \, \d \xi,
\]
when the eigenvalues of $H_1$ are non-positive, where $\mathcal{T}\exp$ is the time-ordering exponential operator,
which is consistent with the exact representation in \cite[Theorem~1]{ALL2023LCH}.
Therefore, the two approaches share the same foundation at the analytical level.

Due to the same foundation, the LCHS in \cite{ALL2023LCH} is also a first-order method. Subsequent improvements on the precision are presented in \cite{ACL2023LCH2}, with their relation to our smooth extension covered in Section \ref{subsec:FourierSmooth}. The differences between the Schr\"odingerization and the LCHS arise in the discretization strategies applied to the auxiliary variable $p$. In particular, Schr\"odingerization leads to the matrix-query complexity of order
\[
\mathcal{O}\!\big(\log(1/\varepsilon)\big),
\]
whereas the optimized LCHS method in \cite{ACL2023LCH2} achieves
\[
\mathcal{O}\Big((\log(1/\varepsilon))^{1/\beta}\Big), \qquad \beta \in (0,1).
\]
in the time-dependent case. Clearly, the latter one has sub-optimal dependence on matrix queries since $\beta$ cannot be exactly equal to 1. 

During the revision stage of this work, we were informed of  a recent study \cite{Low25LCHS}, which introduces a generalized \textit{approximate version} of LCHS. This extension incorporates a kernel function with exponential decay -- which inspired our own work -- also allowing for a quantum ODE algorithm that can achieve optimal precision dependence.

%
%
%
%
%
%
%
\section*{Acknowledgements}

We kindly thank Dr. Guang Hao Low for pointing out to us some errors in the earlier version of this paper, where our earlier result gave a scaling that was near-optimal instead of optimal. This helped us to provide an improved result and to achieve optimality.

SJ and NL are supported by NSFC grant No. 12341104, the Shanghai Jiao Tong University 2030 Initiative and the Fundamental Research Funds for the Central Universities. SJ was also partially supported by the Shanghai Municipal Science and
Technology Major Project (2021SHZDZX0102). NL also acknowledges funding from NSFC grant No.12471411 and the Science and Technology Program of Shanghai, China (21JC1402900).
CM was partially supported by NSFC grant No. 12501607,
the Science and Technology Commission of Shanghai Municipality (No.22DZ2229014),
the China Postdoctoral Science Foundation (No. 2023M732248) and the Postdoctoral Innovative Talents Support Program (No. BX20230219).
YP was supported by the Xiangtan University 2025 Postgraduate Innovation Project (No. XDCX2025Y216).
YY was supported by NSFC grant No. 12301561, the Key Project of Scientific Research Project of Hunan Provincial Department of Education (No. 24A0100), the Science and Technology Innovation Program of Hunan Province (No. 2025RC3150) and the 111 Project (No. D23017).

\bibliographystyle{unsrt} 
\bibliography{Refs}

\end{document}